\theoremstyle{plain}
\newtheorem{prop}{Proposition}[section]
\newtheorem{teo}[prop]{Theorem}
\newtheorem*{teo2}{Theorem 5.3}
\newtheorem*{prop2}{Proposition 4.1}
\newtheorem{defn}[prop]{Definition}
\newtheorem{lem}[prop]{Lemma}
\theoremstyle{remark}
\newtheorem{oss}[prop]{Remark}
\definecolor{red}{rgb}{1,0,0}
\definecolor{green}{rgb}{0,1,0.2}
\title{Uniruledness of some moduli spaces of stable pointed curves}
\author{L. BENZO}
\date{}
\begin{document}
\maketitle
\footnotetext{\noindent 2000 {\em Mathematics Subject Classification}.
14H10, 14H51.
\newline \noindent{{\em Keywords and phrases.} Pointed curve, Moduli space, Uniruledness, Hyperelliptic curves, Complete intersection surfaces.}}
\begin{abstract}
\noindent We prove uniruledness of some moduli spaces $\overline{\mathcal{M}}_{g,n}$ of stable curves of genus $g$ with $n$ marked points using linear systems on nonsingular projective surfaces containing the general curve of genus $g$. Precisely we show that $\overline{\mathcal{M}}_{g,n}$ is uniruled for $g=12$ and $n \leq 5$, $g=13$ and $n \leq 3$, $g=15$ and $n \leq 2$.\\
We then prove that the pointed hyperelliptic locus $\mathcal{H}_{g,n}$ is uniruled for $g \geq 2$ and $n \leq 4g+4$.\\
In the last part we show that a nonsingular complete intersection surface does not carry a linear system containing the general curve of genus $g \geq 16$ and if it carries a linear system containing the general curve of genus $12 \leq g \leq 15$, then it is canonical.
\end{abstract}
\begin{section}{Introduction and overview of the strategy}
Let $\mathcal{M}_{g,n}$ be the (coarse) moduli space of smooth curves of genus $g$ with $n$ marked points defined over the complex numbers and $\overline{\mathcal{M}}_{g,n}$ its Deligne-Mumford compactification.\\
The birational geometry of the moduli spaces $\overline{\mathcal{M}}_{g,n}$ has been extensively studied in the last decade. In 2003 Logan (\cite{LO}) proved the existence of an integer $f(g)$ for all $4 \leq g \leq 23$ such that $\overline{\mathcal{M}}_{g,n}$ is of general type for $n \geq f(g)$. Logan's results were improved by Farkas in \cite{F1}, and by Farkas and Verra in \cite{FV1}.\\
On the other hand various results concerning rationality and unirationality in the range $2 \leq g \leq 12$ were proved by Bini and Fontanari (\cite{BF}), Casnati and Fontanari (\cite{CF}) and Ballico, Casnati and Fontanari (\cite{BCF}). Farkas and Verra (\cite{FV2}) advanced our knowledge in the case of negative Kodaira dimension by proving uniruledness for some $\overline{\mathcal{M}}_{g,n}$, $5 \leq g \leq 10$. Further results in the range $12 \leq g \leq 14$ were proved in \cite{BFV} and \cite{BV}.\\
The methods involved in all these works are not uniform. For example the arguments used by the cited Italian authors recall sometimes classical constructions and in any case do not use computation of classes of divisors in $\overline{\mathcal{M}}_{g,n}$, which is the heart of Logan's method.\\
The following table sums up the results of the cited works, exhibiting what has been proved about rationality, unirationality and uniruledness properties for the $\overline{\mathcal{M}}_{g,n}$ and about their Kodaira dimension.\\
In particular in the above contributions it is proved that $\overline{\mathcal{M}}_{g,n}$ is rational for $0 \leq n \leq a(g)$, unirational for $0 \leq n \leq b(g)$, uniruled for $0 \leq n \leq \sigma(g)$ and has nonnegative Kodaira dimension for $n \geq \tau(g)$, where the values of $a$, $b$, $\sigma$ and $\tau$ are as in the table.\\
Note that for $g=4,5,6,7,11$ one has $\tau(g)=\sigma(g)+1$.
\begin{center}
{\scriptsize
\begin{tabular}{|c|c|c|c|c|c|c|c|c|c|c|c|c|c|c|c|c|c|c|c|c|c|}
  \hline
\label{tabella}
  $g$ & 2 & 3 & 4 & 5 & 6 & 7 & 8 & 9 & 10 & 11 & 12 & 13 & 14 & 15 & 16 & 17 & 18 & 19 & 20 & 21 \\
  \hline
  $a(g)$ & 12 & 14 & 15 & 12 & 8 &  &  &  &  &  &  &  &  &  &  &  &  & & & \\
  \hline
  $b(g)$ & 12 & 14 & 15 & 12 & 15 & 11 & 8 & 9 & 3 & 10 & 1 & 0 & 2 &  &  &  &  & & & \\
  \hline
  $\sigma(g)$ & 12 & 14 & 15 & 14 & 15 & 13 & 12 & 10 & 9 & 10 & 1 & 0 & 2 & 0 & 0 &  &  & & & \\
  \hline
  $\tau(g)$   &  &  & 16 & 15 & 16 & 14 & 14 & 13 & 11 & 11& 11 & 11 & 10 & 10 & 9 & 9 & 9 & 7 & 6 & 4\\
  \hline
\end{tabular}}
\end{center}
$\left.\right.$\\
$\left.\right.$\\
$\left.\right.$
With a little abuse of notation we allowed $n=0$ to refer to the moduli space $\overline{\mathcal{M}}_g$. In the sequel, when we write $\overline{\mathcal{M}}_{g,n}$, we will always suppose $n \geq 1$.\\
Let $g \geq 2$ be a fixed integer and suppose that there exists a nonsingular projective surface carrying a positive-dimensional non-isotrivial linear system containing the general curve of genus $g$. The idea which will be developed in the first part of this work is to use these linear systems to exhibit a rational curve on $\overline{\mathcal{M}}_{g,n}$ (for some $n$) passing through the general point, thus proving the uniruledness of the space.\\
As the table shows, the problem of stating for which pairs $(g,n)$ $\overline{\mathcal{M}}_{g,n}$ is uniruled is almost solved for $4 \leq g \leq 11$. In this range the question is still open only for four moduli spaces, namely $\overline{\mathcal{M}}_{8,13}$, $\overline{\mathcal{M}}_{9,11}$, $\overline{\mathcal{M}}_{9,12}$ and $\overline{\mathcal{M}}_{10,10}$.\\
On the other hand for $17 \leq g \leq 21$ even the Kodaira dimension of $\overline{\mathcal{M}}_g$ is unknown.\\
We will focus our attention on the remaining values $12 \leq g \leq 16$. In this range $\overline{\mathcal{M}}_{12,1}$, $\overline{\mathcal{M}}_{14,1}$ and $\overline{\mathcal{M}}_{14,2}$ were the only moduli spaces of stable pointed curves which were known to be uniruled (actually unirational).\\
Theorem \ref{equivalentimguniruledsistemalineare} assures that if $\overline{\mathcal{M}}_g$ ($g \geq 3$) is uniruled, then a positive-dimensional linear system containing the general curve of genus $g$ on a surface which is not irrational ruled must exist, but does not say anything about how to construct such a linear system.\\
Let us consider for example the case $g=16$. In \cite{CR3} Chang and Ran showed that the class of the canonical divisor $K_{\overline{\mathcal{M}}_{16}}$ is not pseudoeffective, thus proving that $\overline{\mathcal{M}}_{16}$ has Kodaira dimension $-\infty$.
Later it was proved in \cite{BDPP} that a projective variety has a pseudoeffective canonical bundle if and only if it is not uniruled. As a consequence, $\overline{\mathcal{M}}_{16}$ turned out to be uniruled, but the proof, as it is carried out, does not exhibit any linear system as above.\\
Similarly, Chang and Ran showed in \cite{CR2} that $\overline{\mathcal{M}}_{15}$ has Kodaira dimension $-\infty$ by exhibiting a nef curve having negative intersection number with the canonical bundle.\\
This result was then improved in \cite{BV} by Bruno and Verra, which showed the rational connectedness of $\overline{\mathcal{M}}_{15}$ by explicitly constructing a rational curve passing through two general points of the space. In their article a positive-dimensional linear system on a nonsingular canonical surface containing the general curve of genus $15$ is constructed.\\
With a little amount of work, linear systems containing the general curve of genus $12$ and $13$ can be extracted from \cite{V} and \cite{CR}, respectively. The key point here is to show that they can be realized on \emph{nonsingular} projective surfaces.\\
The case $g=14$ can also be handled using \cite{V}, but in this case our method does not improve the known results.\\
Our main theorem is the following
\begin{teo}
\label{maintheorem}
The moduli space $\overline{\mathcal{M}}_{g,n}$ is uniruled for $g=12$ and $n \leq 5$, $g=13$ and $n \leq 3$, $g=15$ and $n \leq 2$.
\end{teo}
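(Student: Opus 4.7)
The strategy for each of the three genera follows a common template motivated by the discussion in the introduction. For each $g \in \{12,13,15\}$ the plan is first to produce a nonsingular projective surface $S=S_g$ equipped with a positive-dimensional non-isotrivial linear system $|L|$ whose general member is a smooth irreducible curve of genus $g$, and such that the general curve of genus $g$ appears as some member of $|L|$ on some such $S$. For $g=15$ one can appeal directly to the construction of a canonical surface in \cite{BV}, while for $g=12$ and $g=13$ one must refine the linear systems implicit in \cite{V} and \cite{CR} respectively to the nonsingular setting. Once the surface is available, the case $n \geq 1$ is handled uniformly by imposing base points.

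Concretely, given a general pointed curve $(C;p_1,\dots,p_n) \in \mathcal{M}_{g,n}$, realise $C$ as a smooth member of $|L|$ on $S$; the $p_i$ become smooth points of $S$ lying on $C$. The sublinear system $|L - p_1 - \cdots - p_n|$ has dimension at least $\dim|L| - n$, and provided $\dim|L| \geq n+1$ a general pencil $\mathcal{P}\subset |L|$ contained in it yields a one-parameter family of genus $g$ curves all containing $p_1,\dots,p_n$. Blowing up $S$ at the (finitely many) base points of $\mathcal{P}$ and using the $p_i$ as sections produces a flat family of stable $n$-pointed curves over $\mathbb{P}^1$, and hence a morphism $\mathbb{P}^1 \to \overline{\mathcal{M}}_{g,n}$ whose image passes through $[(C;p_1,\dots,p_n)]$. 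Non-constancy is automatic: the full system $|L|$ is already non-isotrivial, and imposing passage through a fixed finite set of points on a general element cannot force the restricted family to have constant modulus, since the moduli map from $|L|$ to $\mathcal{M}_g$ dominates a positive-dimensional locus in every direction transverse to the orbit of $C$.

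The numerical verification then reduces to checking that $\dim|L| \geq 6,\, 4,\, 3$ for $g=12,13,15$ respectively (giving the ranges $n \leq 5,\,3,\,2$), together with the fact that on a general member $C \in |L|$ one can prescribe an $n$-tuple of \emph{general} points as base points of a pencil in $|L|$. This last fact follows from a parameter count: the locus of $(C;p_1,\dots,p_n)$ with $C \in |L|$ and $p_i \in C$ dominates $\overline{\mathcal{M}}_{g,n}$ as soon as the moduli map from the total space $\{(C,p_1,\dots,p_n) : C \in |L|,\ p_i \in C\}$ has image of full dimension in $\overline{\mathcal{M}}_{g,n}$, which in turn is an elementary codimension check using that $|L|$ dominates $\mathcal{M}_g$.

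The main obstacle I expect to be the \emph{nonsingularity} of the ambient surface for $g=12$ and $g=13$. The references \cite{V} and \cite{CR} provide curves on surfaces which a priori may carry singularities, and one has to realise the general curve of the given genus inside a genuinely smooth surface — probably by exhibiting the surface as a suitable complete intersection or an explicit blow-up — in order to guarantee that the linear system defines a flat family of smooth curves of genus $g$. Once this is in place, together with the existence of the smooth canonical surface for $g=15$ from \cite{BV}, the uniform pencil-of-base-points argument above delivers the three uniruledness statements simultaneously.
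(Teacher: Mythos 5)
Your template---realise the general pointed curve $(C;p_1,\dots,p_n)$ on a surface $S$, pass to the sublinear system $|L-p_1-\cdots-p_n|$, and take a pencil in it---only works when $\dim|L|\geq n+1$, and this is exactly where the proposal breaks down: the linear systems that actually exist for these genera are far smaller than the dimensions $6,4,3$ you require. For $g=12$ the surface is a canonical complete intersection of four quadrics in $\mathbb{P}^6$ and adjunction plus Riemann--Roch give $h^{0}(\mathcal{O}_C(C))=1$, hence $\dim|C|=1$ (a pencil, with no freedom to impose even one general base point); for $g=13$ (quintic in $\mathbb{P}^3$) one gets $\dim|C|=3$, and for $g=15$ one gets $\dim|C|=2$. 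Moreover larger systems are not available in principle: Sernesi's inequality $5(g-1)-[11\chi(\mathcal{O}_S)-2K_S^2+2C^2]+h^{0}(\mathcal{O}_S(C))\leq 0$ becomes harder to satisfy as $h^{0}(\mathcal{O}_S(C))$ grows, and for $g\geq 12$ the surface is forced to be canonical, which pins $\dim|C|$ down to these small values. Your method therefore yields only $n\leq\dim|C|-1$, i.e.\ $n\leq 0,2,1$ respectively (this weaker statement is precisely Remark 3.3 of the paper), not the claimed $n\leq 5,3,2$.

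The missing idea is that the marked points are not imposed as base conditions on $|C|$; they are extracted from the base divisor that the pencil already cuts on $C$, namely a divisor $D\in|\mathcal{O}_C(C)|$, which is a $g^r_d$ with $r=\dim|C|-1$ and $d=C^2$. The paper shows (i) that $|\mathcal{O}_C(C)|$ is a \emph{general} point of $W^r_d(C)$, by proving that the classifying map $\alpha:V\to\mathcal{W}^r_{g,d}$ to the universal Brill--Noether locus is dominant (via the residual series $|\mathcal{O}_C(1)|$ being a general $g^{r'}_{d'}$), and (ii) that a general $d$-tuple in $C^r_{(d)}$ can be forced to contain $r+\rho(g,r,d)$ general points of $C$, using the Fulton--Lazarsfeld ampleness of the divisors $C_{(d-1),x}$ in $C_{(d)}$ together with $\dim C^r_{(d)}=r+\rho$. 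This is how one gains the extra $\rho$ marked points: $n\leq r+\rho$ gives $0+5=5$, $2+1=3$, $1+1=2$ for $g=12,13,15$. Your correct instincts about the nonsingularity of the ambient surface for $g=12,13$ and about non-isotriviality are genuine issues the paper does address, but without the Brill--Noether mechanism the announced ranges of $n$ are out of reach.
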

The argument used for the proof (Theorem \ref{mgnuniruled}) is in principle generalizable to check uniruledness for various loci inside $\overline{\mathcal{M}}_{g,n}$. As an example we prove the following statement about pointed hyperelliptic loci $\mathcal{H}_{g,n} \subseteq \mathcal{M}_{g,n}$ i.e. loci of points $[(C,p_1,...,p_n)]$ such that $C$ is hyperelliptic:
\begin{prop2}
The $n$-pointed hyperelliptic locus $\mathcal{H}_{g,n}$ is uniruled for all $g \geq 2$ and $n \leq 4g + 4$.
\end{prop2}
In the particular case $g=2$ one has $\mathcal{H}_{2,n}=\mathcal{M}_{2,n}$, thus Proposition \ref{hgn} proves the uniruledness of $\mathcal{M}_{2,n}$ for $n=1,...,12$. Actually $\mathcal{M}_{2,n}$ is known to be rational for $n=1,...,12$ (see \cite{CF}).
Moreover, recently Casnati has proved the rationality of the pointed hyperelliptic loci $\mathcal{H}_{g,n}$ for $g \geq 3$ and $n \leq 2g+8$ (see \cite{CA}).\\ \\
For reasons which will be outlined in the sequel, the natural question about a possible sharpening of these results led us to the study of linear systems containing the general curve of genus $g$ on nonsingular complete intersection surfaces. The following result sums up our conclusions:
\begin{teo2}
Let $C$ be a general curve of genus $g \geq 3$ moving in a positive-dimensional linear system on a nonsingular projective surface $S$ which is a complete intersection. Then $g \leq 15$. Moreover
\begin{itemize}
  \item if $g \leq 11$, then $\emph{Kod}(S) \leq 0$ or $S$ is a canonical surface;
  \item  if $12 \leq g \leq 15$, then $S$ is a canonical surface.
\end{itemize}
\end{teo2}
\end{section}
\begin{section}{Background material}
\label{background}
We work over the field of complex numbers.\\
Let $X$ be a projective nonsingular surface. Consider a fibration $f:X \rightarrow \mathbb{P}^1$ whose general element is a nonsingular connected curve of genus $g \geq 2$, and $n$ disjoint sections $\sigma_i:\mathbb{P}^1 \rightarrow X$ i.e. morphisms such that $f \circ \sigma_i(y)=y$ for all $y \in \mathbb{P}^1$, $i=1,...,n$. Let $E_i \doteq \sigma_i(\mathbb{P}^1)$. Then define the (n+1)-tuple $(f,E_1,...,E_n)$ as the fibration in $n$-pointed curves of genus $g$ whose fibre over $y$ for all $y \in \mathbb{P}^1$ is the $n$-pointed curve $(C,p_1,...,p_n)$, where $C=f^{-1}(y)$ and $p_i \doteq C \cap E_i$.\\
Let $\Psi_{(f,E_1,...,E_n)}: \mathbb{P}^1 \rightarrow \overline{\mathcal{M}}_{g,n}$ be the morphism given (at least on a nonempty open set) by
$$y \mapsto [(f^{-1}(y),p_1,...,p_n)]$$
where $[\left.\right.]$ denotes the isomorphism class of the pointed curve.\\
A fibration is called \emph{isotrivial} if all its nonsingular fibres are mutually isomorphic or, equivalently,
if two general nonsingular fibres are mutually isomorphic. If $f$ is non-isotrivial, then $\Psi_{(f,E_1,...,E_n)}$ defines a rational curve in $\overline{\mathcal{M}}_{g,n}$ passing through the points corresponding to (an open set of) the fibres of $(f,E_1,...,E_n)$.\\ \\
Let $C$ be a nonsingular projective curve of genus $g$ and let $r,d$ be two non-negative integers.
As standard in Brill-Noether theory, we will denote by $W^r_d(C)$ the subvariety of $\text{Pic}^d(C)$ constructed in \cite{ACGH}, having $\text{Supp}(W^r_d(C))=\left\{L \in \text{Pic}^d(C) \left| h^{0}(L) \geq r+1 \right. \right\}$.
We will denote by $\mathcal{W}^r_{g,d}$ the so-called \emph{universal Brill-Noether locus} i.e. the moduli space of pairs $(C,L)$ where $g(C)=g$ and $L \in W^r_d(C)$.\\
A notion which is fundamental for our purposes is that of curve with general moduli.
\begin{defn}
\label{generalcurve}
A connected projective nonsingular curve $C$ of genus $g$ has \emph{general moduli}, or is a \emph{general curve of genus $g$}, if it is a general fibre in a smooth projective family $\mathscr{C} \rightarrow V$ of curves of genus $g$, parameterized by a nonsingular connected algebraic scheme $V$, and having surjective Kodaira-Spencer map at each closed point $v \in V$.
\end{defn}
In the sequel, when we talk about a general curve, we will always assume it to be connected and nonsingular.\\
Let $D$ be a divisor on $C$ and consider the \emph{Petri map} $\mu_0(D):H^{0}(\mathcal{O}_C(D)) \otimes H^{0}(\mathcal{O}_C(K_C-D)) \rightarrow H^{0}(\mathcal{O}_C(K_C))$ given by the cup-product. If $C$ has general moduli, then $\mu_0$ is injective for any $D$. As an easy consequence we obtain the following
\begin{prop}[\cite{AC}, Corollary 5.7]
\label{proprietàcurvemoduligenerali}
Let $C$ be a general curve of genus $g \geq 3$. Then $H^{1}(L^{\otimes 2})=(0)$ for any invertible sheaf $L$ on $C$ such that $h^{0}(L) \geq 2$.
\end{prop}
Coherently with Definition \ref{generalcurve}, we will adopt the following definition:
\begin{defn}
\label{generalcurveonlinearsystem}
Let $S$ be a projective nonsingular surface, and $C \subset S$ a projective nonsingular connected curve $C$ of genus $g \geq 2$. Let $r \doteq \dim(|C|)$. We say that $C$ is a \emph{general curve moving in an $r$-dimensional linear system on $S$} if there is a pointed connected nonsingular algebraic scheme $(V,v)$ and a commutative diagram as follows:
\begin{equation}
\label{famigliadisuperfici}
\xymatrix{C \ar@{^{(}->}[r]  \ar[d] & \mathscr{C} \ar@{^{(}->}[r] \ar[d] & \mathscr{S} \ar[ld]^{\beta} \\ \emph{Spec}(k) \ar[r]^{v} & V & }
\end{equation}
such that
\begin{itemize}
\item[(i)] the left square is a smooth family of deformations of $C$ having surjective Kodaira-Spencer map at every closed point;
\item[(ii)] $\beta$ is a smooth family of projective surfaces and the upper right inclusion restricts over $v$ to the inclusion $C \subset S$;
\item[(iii)] $\dim(|\mathscr{C}(w)|) \geq r$ on the surface $\mathscr{S}(w)$ for all closed points $w \in V$.
\end{itemize}
\end{defn}
Let $S$ be a projective nonsingular surface and let $C \subset S$ be a projective nonsingular connected curve of genus $g$ such that
$\dim(|C|) \geq 1$.
Consider a linear pencil $\Lambda$ contained in $|C|$ whose general member is nonsingular and let $\epsilon:X \rightarrow S$ be the blow-up at its base points (including the infinitely near ones). We obtain a fibration $f:X \rightarrow \mathbb{P}^1$
defined as the composition of $\epsilon$ with the rational map $S \dashrightarrow \mathbb{P}^1$ defined by $\Lambda$. We will call $f$ the \emph{fibration defined by the pencil $\Lambda$}.
\begin{prop}[\cite{S4}, Proposition 4.7]
\label{fibrazioneisotrivialesuperficierigata}
Let $C$ be a general curve of genus $g \geq 3$ moving in a positive-dimensional linear system on a nonsingular projective surface $S$, and let $\Lambda \subset |C|$ be a linear pencil containing $C$ as a member. Then the following are equivalent:
\begin{itemize}
\item[(i)] $\Lambda$ defines an isotrivial fibration;
\item[(ii)] $S$ is birationally equivalent to $C \times \mathbb{P}^1$;
\item[(iii)] $S$ is a non-rational birationally ruled surface.
\end{itemize}
\end{prop}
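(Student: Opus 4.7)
The plan is to prove the implications cyclically: (i) $\Rightarrow$ (ii) $\Rightarrow$ (iii) $\Rightarrow$ (i). The implication (ii) $\Rightarrow$ (iii) is immediate, since $C \times \mathbb{P}^1$ is ruled over the irrational curve $C$, hence any birational model of it is non-rational birationally ruled.

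For (i) $\Rightarrow$ (ii), I would exploit the fact that a general curve $C$ of genus $g \geq 3$ has trivial automorphism group. Let $f: X \to \mathbb{P}^1$ be the fibration associated to the pencil $\Lambda$ after blowing up its base points, and let $U \subset \mathbb{P}^1$ be the open locus over which $f$ is smooth. The relative isomorphism scheme $I := \mathrm{Isom}_U(C \times U,\, X_U) \to U$ is representable, and étale because smooth curves of genus $\geq 2$ have finite automorphism group. By isotriviality each fiber of $I \to U$ is nonempty, and by $\mathrm{Aut}(C) = \{\mathrm{id}\}$ each fiber is a single point. Hence $I \to U$ is étale of degree one and so is itself an isomorphism; its tautological section trivializes $X_U$ as $C \times U$. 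Thus $X$, and therefore $S$, is birational to $C \times \mathbb{P}^1$.

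For (iii) $\Rightarrow$ (i), the essential ingredient is that the Jacobian $J(C)$ of a general curve of genus $g \geq 3$ is a simple abelian variety. Assume $S$ is non-rational birationally ruled and let $\pi: S \to B$ be the ruling, with $g(B) \geq 1$. Since $C$ has positive genus and the fibers of $\pi$ are rational, $C$ cannot lie in a fiber of $\pi$, so $\pi|_C: C \to B$ is surjective and induces, by Albanese functoriality, a surjective morphism $J(C) \twoheadrightarrow J(B)$. Simplicity of $J(C)$ forces this surjection to be an isogeny, whence $g(B) = g(C) \geq 3$; the Riemann--Hurwitz formula then yields $\deg(\pi|_C) = 1$, i.e.\ $\pi|_C$ is an isomorphism. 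For any other member $C_t$ of $\Lambda$ one has $C_t \cdot F = C \cdot F = 1$ (where $F$ is a general fiber of $\pi$), so $\pi|_{C_t}: C_t \to B$ has degree one and is likewise an isomorphism. All the $C_t$ are therefore mutually isomorphic to $B$, and the fibration is isotrivial.

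The main subtlety, and the step requiring the most care, is ensuring that both classical facts about a general curve -- triviality of $\mathrm{Aut}(C)$ and simplicity of $J(C)$ -- can legitimately be invoked from the ``general moduli'' hypothesis of Definition \ref{generalcurve}. Both loci (curves with non-trivial automorphisms, curves with non-simple Jacobian) are proper closed subsets of $\mathcal{M}_g$ for $g \geq 3$, so a curve with general moduli lies in the complement of both; this is the only non-formal input, everything else being standard deformation theory and Albanese functoriality.
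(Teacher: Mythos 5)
This statement is quoted verbatim from Sernesi \cite{S4}, Proposition 4.7, and the paper gives no proof of it, so there is no in-paper argument to compare against; I can only assess your proposal on its own terms. Your cycle of implications is the standard one and is essentially correct: (ii)~$\Rightarrow$~(iii) is immediate; (i)~$\Rightarrow$~(ii) via the finite \'etale $\mathrm{Isom}$-scheme together with $\mathrm{Aut}(C)=\{\mathrm{id}\}$ for a general curve of genus $g\geq 3$ is exactly the right mechanism (and is where the hypothesis $g\geq 3$ rather than $g\geq 2$ is genuinely used); (iii)~$\Rightarrow$~(i) via $C\to B$ surjective, $g(B)=g$, $\deg(\pi|_C)=1$, and $C_t\cdot F=C\cdot F=1$ for all members of $\Lambda$ is also the standard route.

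Two points deserve more care than you give them. First, your closing claim that the locus of curves with non-simple Jacobian is a \emph{proper closed subset} of $\mathcal{M}_g$ is not accurate: it is a countable union of proper closed subsets (one for each isogeny type), so a curve that is merely ``general'' in the sense of Definition \ref{generalcurve} does not automatically avoid it. You can repair this either by noting that in the geometric situation at hand the degree $n=C\cdot F$ is fixed, so only finitely many Hurwitz loci are relevant and a dimension count (covers of a curve of genus $h\geq 1$ of degree $n\geq 2$ depend on at most $2g-2-n(2h-2)+3h-3<3g-3$ moduli for $g\geq 3$) shows a general curve admits no such cover --- this bypasses simplicity of $J(C)$ altogether --- or by arguing that isotriviality of the pencil is a closed condition on the parameter space $V$, so it suffices to verify it for a very general fibre. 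Second, you should justify that the ruling $S\dashrightarrow B$ is actually a morphism $\pi:S\to B$ with all curves in its fibres rational (this follows since a rational map from a smooth surface to a curve of positive genus extends via the Albanese, and every curve contracted on a common resolution is either exceptional or dominates a ruling fibre of $B\times\mathbb{P}^1$); without this, the assertion that $C$ is not contained in a fibre is not yet available.
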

In the sequel we will say that a linear system is \emph{isotrivial} if a general pencil in it defines an isotrivial fibration.\\
As a corollary of Proposition \ref{fibrazioneisotrivialesuperficierigata} we obtain the following well-known explicit condition for the
uniruledness of $\overline{\mathcal{M}}_g$.
\begin{teo}[\cite{S4}, Theorem 4.9]
\label{equivalentimguniruledsistemalineare}
The following conditions are equivalent for an integer $g \geq 3$:
\begin{itemize}
\item $\overline{\mathcal{M}}_g$ is uniruled;
\item A general curve $C$ of genus $g$ moves in a positive-dimensional linear system on some nonsingular projective surface which is not irrational ruled.
\end{itemize}
\end{teo}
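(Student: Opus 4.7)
The plan is to prove the equivalence by invoking Proposition \ref{fibrazioneisotrivialesuperficierigata} in both directions, using it to translate between positive-dimensional linear systems on surfaces and non-constant rational curves in $\overline{\mathcal{M}}_g$.

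For the direction ``linear system implies uniruledness'', I start with the family $\mathscr{C} \hookrightarrow \mathscr{S} \to V$ furnished by Definition \ref{generalcurveonlinearsystem}. Being not irrational ruled is an open condition on $V$, so after shrinking I may assume every $\mathscr{S}(v)$ has this property. For each $v \in V$ I select a linear pencil $\Lambda_v \subset |\mathscr{C}(v)|$ with smooth general member (possible since $\dim |\mathscr{C}(v)| \geq 1$), blow up its base locus, and obtain a fibration $f_v: X_v \to \mathbb{P}^1$ containing $\mathscr{C}(v)$ as a smooth fibre. Proposition \ref{fibrazioneisotrivialesuperficierigata} forces $f_v$ to be non-isotrivial, so the induced $\Psi_{f_v}: \mathbb{P}^1 \to \overline{\mathcal{M}}_g$ is a non-constant morphism through $[\mathscr{C}(v)]$. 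Surjectivity of the Kodaira-Spencer map in (i) of Definition \ref{generalcurveonlinearsystem} makes the classifying points $[\mathscr{C}(v)]$ sweep out a dense open subset of $\overline{\mathcal{M}}_g$, and the attached rational curves $\Psi_{f_v}(\mathbb{P}^1)$ then yield uniruledness.

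For the converse, assume $\overline{\mathcal{M}}_g$ is uniruled and pick a non-constant morphism $\phi: \mathbb{P}^1 \to \overline{\mathcal{M}}_g$ through the general moduli point $[C]$. Since $g \geq 3$ and $C$ is general, $C$ has trivial automorphism group, so on an open neighbourhood $U \subset \mathbb{P}^1$ of $\phi^{-1}([C])$ the morphism $\phi$ lifts to an honest smooth family of genus $g$ curves. Extending by stable reduction and resolving singularities, I produce a smooth projective surface $X$ with a morphism $f: X \to \mathbb{P}^1$ whose general fibre is a smooth genus $g$ curve realizing $\phi$. The curve $C$ appears as a fibre, and the pencil of fibres witnesses $\dim |C|_X \geq 1$. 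Non-constancy of $\phi$ makes $f$ non-isotrivial, so Proposition \ref{fibrazioneisotrivialesuperficierigata} prevents $X$ from being irrational ruled. Performing this construction in family over an irreducible component of the Hom-scheme parametrizing the covering rational curves then yields the data required by Definition \ref{generalcurveonlinearsystem}, with Kodaira-Spencer surjectivity forced by the fact that the uniruling dominates a dense subset of $\overline{\mathcal{M}}_g$.

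The main obstacle is the converse: upgrading a rational curve in the coarse moduli space $\overline{\mathcal{M}}_g$ to an honest fibration by smooth curves over $\mathbb{P}^1$. One must address the coarseness of $\overline{\mathcal{M}}_g$ (circumventable via triviality of generic automorphisms and, if necessary, a small finite base change together with a descent argument), the passage through the Deligne-Mumford boundary (handled by stable reduction and relative minimal models to produce a smooth projective total space), and the globalization in families needed to verify Definition \ref{generalcurveonlinearsystem}. The forward direction, by contrast, is essentially a family-parametrized application of Proposition \ref{fibrazioneisotrivialesuperficierigata}.
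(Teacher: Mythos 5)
The paper does not actually prove this statement: it is imported verbatim from \cite{S4}, Theorem 4.9, and presented only as ``a corollary of Proposition \ref{fibrazioneisotrivialesuperficierigata}'', which is exactly the reduction your two directions carry out, so your argument is the intended one and is essentially sound (including your correct identification of the real work, namely upgrading a rational curve in the coarse space to a fibration and then to the family data demanded by Definition \ref{generalcurveonlinearsystem}). One small repair: in the converse direction do not invoke stable reduction, since it requires a ramified base change that may replace $\mathbb{P}^1$ by a curve of positive genus; it suffices to take the closure of the family $\mathscr{C}_U\subset\mathbb{P}^N\times U$ inside $\mathbb{P}^N\times\mathbb{P}^1$ and resolve the singularities of the total space, the fibres over $\mathbb{P}^1\smallsetminus U$ being irrelevant because only the linear pencil of fibres and its non-isotriviality are needed to apply Proposition \ref{fibrazioneisotrivialesuperficierigata}.
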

Motivated by this theorem, Sernesi (\cite{S4}) used deformation theory of fibrations over $\mathbb{P}^1$ to work out conditions on a nonsingular projective surface to carry a positive-dimensional non-isotrivial linear system containing the general curve of genus $g$.\\
The following result shows that general curves on irregular surfaces of positive geometric genus cannot move in a positive-dimensional linear
system.
\begin{teo}[\cite{S4}, Theorem 6.5]
\label{sirregolare}
Let $S$ be a projective nonsingular surface with $p_g > 0$ and $q > 0$, and let $C \subset S$ be a nonsingular curve of genus $g \geq 3$. Then $C$ cannot be a general curve moving in a positive-dimensional linear system on $S$.
\end{teo}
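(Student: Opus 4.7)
The plan is to contradict the generality of $C$ by combining the Albanese map of $S$ with the simplicity of a general Jacobian and the Torelli theorem. Suppose, for contradiction, that such a $C$ of genus $g \geq 3$ exists, moving in a pencil $\Lambda \subseteq |C|$ with members $\{C_t\}_{t \in \mathbb{P}^1}$. The hypothesis $p_g(S) > 0$ rules out $S$ being birationally ruled, so by Proposition \ref{fibrazioneisotrivialesuperficierigata} the pencil $\Lambda$ defines a non-isotrivial fibration; in particular, for generic $t$ the members $C_t$ are pairwise non-isomorphic.

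Next I exploit the irregularity $q(S) > 0$. Let $A = \mathrm{Alb}(S)$, so that $\dim A = q > 0$, and consider the restriction $\alpha_S|_{C_t}: C_t \to A$. If this restriction were constant for generic $t$, then $\alpha_S(S) = \bigcup_t \alpha_S(C_t)$ would be a curve $B$, exhibiting $S$ as fibred over $B$ with the $C_t$ as its fibres, and producing a non-constant morphism $\mathbb{P}^1 \to B$; this is impossible since $g(B) = q(S) \geq 1$. Hence the restriction is non-constant and induces a nonzero morphism $J(C_t) \to A$. Because the images $\alpha_S(C_t)$ sweep out $\alpha_S(S)$ and therefore generate $A$ as an abelian variety, this morphism is surjective. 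For $C_t$ general of genus $g \geq 3$ the Jacobian $J(C_t)$ is simple, and a nonzero morphism from a simple abelian variety is an isogeny onto its image, so surjectivity forces $q(S) = \dim A = g$.

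The final contradiction comes from Torelli. Each $J(C_t)$ is isogenous to the fixed abelian variety $A$, so the modular assignment $t \mapsto (J(C_t), \Theta_{C_t})$ factors through the subset of $\mathcal{A}_g$ of principally polarized abelian varieties whose underlying abelian variety lies in the (countable) isogeny class of $A$. As an irreducible constructible subset contained in a countable set, the image of $\mathbb{P}^1$ must be a single point, so $(J(C_t), \Theta_{C_t})$ is independent of $t$. Torelli's theorem then implies $C_t \cong C_{t'}$ for all generic $t, t'$, contradicting the non-isotriviality obtained in the first step. The main obstacle is to make the ``countable isogeny class'' claim fully rigorous: one must verify that isogenies from a fixed simple $A$ are parameterized by countably many finite subgroups of torsion, and that each resulting principal polarization defines a discrete point in $\mathcal{A}_g$. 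It is precisely at this final step that the hypothesis $p_g > 0$ becomes essential, for without the non-isotriviality of $\Lambda$ the Torelli argument is vacuous.
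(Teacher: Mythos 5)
The paper itself gives no proof of this statement: it is imported wholesale from \cite{S4}, Theorem 6.5, so there is no internal argument to compare yours against. Judged on its own terms, your strategy (non-isotriviality from $p_g>0$ via Proposition \ref{fibrazioneisotrivialesuperficierigata}, a nonzero map $J(C_t)\to\mathrm{Alb}(S)$ from $q>0$, then countability of an isogeny class in $\mathcal{A}_g$ plus Torelli) is a sensible route and can be made to work, but as written it contains a genuine gap at its load-bearing step.

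The problem is the sentence ``for $C_t$ general of genus $g\geq 3$ the Jacobian $J(C_t)$ is simple.'' Only the distinguished member $C$ of the pencil is assumed to be a general curve of genus $g$; the other members $C_t$ are just whatever curves happen to lie in $\Lambda$, and nothing forces their Jacobians to be simple. Yet you use simplicity of \emph{every} generic $J(C_t)$ to place every generic $J(C_t)$ in the isogeny class of the fixed $A$, which is exactly what the countability step consumes, so the slip is not cosmetic. A related over-quick step is the claim that each individual $\alpha_S(C_t)$ generates $A$: sweeping out $\alpha_S(S)$ collectively does not give this for a fixed $t$. Both can be repaired by the same device: the image $A_t=\mathrm{Im}\bigl(J(C_t)\to A\bigr)$ is an abelian subvariety of $A$, the abelian subvarieties of $A$ form a countable set, and $t\mapsto A_t$ varies algebraically over the irreducible base, so $A_t\equiv A'$ is constant; simplicity of the single $J(C)$ then gives $\dim A'=g$, and for every other generic $t$ the surjection $J(C_t)\twoheadrightarrow A'$ between $g$-dimensional abelian varieties is automatically an isogeny, putting all generic $J(C_t)$ in one isogeny class without ever needing them simple. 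You should also flag that simplicity of the Jacobian holds only for a \emph{very} general curve (off countably many proper subvarieties of $\mathcal{M}_g$), whereas ``general'' in Definition \ref{generalcurve} is an open Kodaira--Spencer condition, so you are silently strengthening the genericity hypothesis; the rest of the countability bookkeeping (finite subgroups of $A$, discreteness of principal polarizations, an irreducible positive-dimensional constructible set not fitting inside a countable set) is fine, as is your observation that $p_g>0$ enters only through non-isotriviality, which is indeed where the statement would fail (e.g. $C\times\mathbb{P}^1$).
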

Moreover, the following inequality involving the fundamental invariants of the surface has to be satisfied:
\begin{teo}[\cite{S4}, Theorem 5.2]
\label{disuguaglianzafibrazionefree}
Let $C$ be a general curve of genus $g \geq 3$ moving in a positive-dimensional non-isotrivial linear system on a nonsingular projective surface $S$. Then
\begin{equation}
\label{5g-111chios}
5(g-1)-\left[11\chi(\mathcal{O}_S)-2K^2_S+2C^2\right]+h^{0}(\mathcal{O}_S(C)) \leq 0.
\end{equation}
\end{teo}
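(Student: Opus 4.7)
The plan is to combine Sernesi's inequality (Theorem~\ref{disuguaglianzafibrazionefree}) with Brill--Noether theory applied to the projective embedding $C\subset S\subset\mathbb{P}^N$, together with a classification of complete intersection surfaces by their canonical index. Write $S$ as a smooth complete intersection of multidegree $(d_1,\dots,d_{N-2})$, let $H$ be the hyperplane class, and set $\sigma=\sum d_i-N-1$ and $d=\prod d_i$, so that $K_S=\sigma H$ and $\deg S=d$. By Lefschetz $q(S)=0$; hence $\chi(\mathcal{O}_S)=1+p_g(S)$ and the Kodaira dimension is entirely determined by $\sigma$: $S$ is rational for $\sigma<0$, a K3 for $\sigma=0$, and of general type for $\sigma\geq 1$, being canonical iff $\sigma=1$. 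Writing $\delta=C\cdot H\geq 1$, adjunction on $C\subset S$ reads $2g-2=C^2+\sigma\delta$, and the positivity of $|C|$ forces $C^2\geq 0$.

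Substituting $C^2=2g-2-\sigma\delta$ into Theorem~\ref{disuguaglianzafibrazionefree} and using $h^0(\mathcal{O}_S(C))\geq 2$ yields
\[
g+2\sigma\delta+2\sigma^2 d \;\leq\; 11\,\chi(\mathcal{O}_S)-1. \tag{$\ast$}
\]
On the other hand, $\mathcal{O}_C(1)=\mathcal{O}_S(1)|_C$ is a $g^{r}_{\delta}$ on the general curve $C$, where $r$ is the dimension of the projective span of $C$ in $\mathbb{P}^N$; since $C$ has general moduli, Brill--Noether gives $\rho(g,r,\delta)\geq 0$, that is
\[
\delta \;\geq\; \frac{r(g+r+1)}{r+1}, \tag{$\ast\ast$}
\]
which for $r=N$ reads $\delta\geq N(g+N+1)/(N+1)$. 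A dimension argument shows that a curve $C\subset S$ with projective span of dimension $r<N$ must lie inside a hyperplane section of $S$ and so generically coincides with $C\in|H|_S$ (forcing $\delta=d$); such curves are then checked directly against $(\ast\ast)$ at the appropriate value of $r$.

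The case $\sigma\geq 2$ is excluded as follows. For $\sigma\geq 3$ one has $\sigma N>2(N+1)$, so the positivity bound $\sigma\delta\leq 2g-2$ is already incompatible with $(\ast\ast)$ at $r=N$ for every $g\geq 3$. For $\sigma=2$, combining positivity with $(\ast\ast)$ forces $g\geq(N+1)^{2}$, while combining $(\ast)$ with $(\ast\ast)$ forces $g\leq 15$ or $16$ depending on the multidegree, yielding a contradiction type by type (for example, on the sextic in $\mathbb{P}^3$ positivity gives $g\geq 16$ while $(\ast)$ gives $g\leq 15$; the only degenerate candidate, a plane section with $\delta=6$ and $g=10$, violates $(\ast\ast)$ at $r=2$). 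For $\sigma=1$, applying the same machine over the five canonical complete intersection types (quintic in $\mathbb{P}^3$; $(2,4)$ and $(3,3)$ in $\mathbb{P}^4$; $(2,2,3)$ in $\mathbb{P}^5$; $(2,2,2,2)$ in $\mathbb{P}^6$) produces $g\leq 15$, the bound being essentially attained on $(2,4)$ and on $(2,2,2,2)$. Finally, the cases $\sigma\leq 0$ reduce to a parameter count: complete intersection K3's are parametrized by a 19-dimensional family of polarizations, each carrying a $g$-dimensional linear system, so the family of pairs $(S,C)$ has dimension at most $19+g<3g-3=\dim\mathcal{M}_g$ once $g\geq 12$, which prevents the induced rational map to $\mathcal{M}_g$ from being dominant; an analogous but easier count handles the rational complete intersections. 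The main obstacle is the case $\sigma=2$, where the three constraints $(\ast)$, $(\ast\ast)$ and $C^2\geq 0$ are nearly tight and force a case-by-case verification across every admissible multidegree, combined with a separate treatment of each possibly degenerate position of $C$.
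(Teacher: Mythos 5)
There is a fundamental mismatch here: the statement you were asked to prove is the inequality $5(g-1)-\left[11\chi(\mathcal{O}_S)-2K^2_S+2C^2\right]+h^{0}(\mathcal{O}_S(C)) \leq 0$ itself, valid for \emph{any} nonsingular projective surface $S$ carrying a general curve of genus $g\geq 3$ in a positive-dimensional non-isotrivial linear system. Your very first sentence announces that the plan is ``to combine Sernesi's inequality (Theorem~\ref{disuguaglianzafibrazionefree}) with Brill--Noether theory,'' i.e.\ you take the statement to be proved as an input. The argument is therefore circular as a proof of this theorem. What you have actually sketched is a proof of a \emph{different} result of the paper, namely the classification in Section 5 (Proposition \ref{elencoSditipogenerale} and the final Theorem on complete intersection surfaces), and indeed your multidegree bookkeeping, the parameter $\sigma=\sum d_i-N-1$, and the case-by-case analysis over $(5)$, $(2,4)$, $(3,3)$, $(2,2,3)$, $(2,2,2,2)$ closely parallel what the paper does there. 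But none of that explains where the specific quantity $11\chi(\mathcal{O}_S)-2K^2_S+2C^2$ comes from, and the restriction to complete intersections loses the generality of the statement.

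For the record, the paper does not reprove this theorem either: it is imported verbatim from \cite{S4} (Theorem 5.2 there). The genuine proof is deformation-theoretic rather than Brill--Noether-theoretic: one takes a pencil $\Lambda\subset|C|$, blows up its $C^2$ base points to obtain a non-isotrivial fibration $f:X\rightarrow\mathbb{P}^1$ with fibres of genus $g$, and uses the fact that $C$ has general moduli to force the family of such fibrations to dominate $\mathcal{M}_g$; a Riemann--Roch and Noether-formula computation of the Euler characteristic of the sheaf controlling deformations of $f$ (on the blown-up surface $X$, whence the correction terms in $C^2$ and $h^0(\mathcal{O}_S(C))$) then produces exactly the displayed inequality. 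If you want to supply a proof of Theorem~\ref{disuguaglianzafibrazionefree}, that is the machinery you need to set up; the Brill--Noether inequality $\rho\geq 0$ and the classification of complete intersections play no role in it.
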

In the same work a systematical analysis gives upper bounds for the genus of $C$ on $S$ according to the Kodaira dimension of $S$.\\
It turns out that, up to a technical assumption contained in Theorem \ref{superficierazionaleggeq11}, a general curve of genus $g \geq 12$ cannot move in a positive-dimensional non-isotrivial linear system on $S$ if the Kodaira dimension of $S$ is $\leq 0$.\\
Indeed if $\text{Kod}(S) \geq 0$ we have the following theorem:
\begin{teo}[\cite{S4}, Theorem 6.2]
\label{kod=0leq11}
Let $S$ be a projective nonsingular surface with Kodaira dimension $\geq 0$, and let $C \subset S$ be a general curve of genus $g \geq 2$ moving in a positive-dimensional linear system. Then
$$g \leq 5p_g(S)+6+\frac{1}{2}h^{0}(\mathcal{O}_S(K_S-C)).$$
In particular
$$g \leq \left\{\begin{array}{cc}
           6, & p_g=0, \\
           11, & p_g=1.
         \end{array}\right.$$
\end{teo}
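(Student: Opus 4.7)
My plan is to combine the numerical inequality of Theorem~\ref{disuguaglianzafibrazionefree} with Riemann--Roch on $S$, and then use Theorem~\ref{sirregolare} to replace $\chi(\mathcal{O}_S)$ by $p_g(S)$. Since $\mathrm{Kod}(S)\geq 0$, the surface $S$ is not birationally ruled, so Proposition~\ref{fibrazioneisotrivialesuperficierigata} guarantees that the linear system containing $C$ is automatically non-isotrivial and Theorem~\ref{disuguaglianzafibrazionefree} applies. I would first pass to the minimal model $S^{\min}$ of $S$: the invariants $p_g$, $q$, $\chi(\mathcal{O}_S)$ and $h^{0}(\mathcal{O}_S(K_S-C))$ are preserved (the last because any effective divisor in $|K_S-C|$ must contain each contracted $(-1)$-curve with sufficiently high multiplicity, so the fixed exceptional part splits off cleanly), while $K_S^2$ grows and $K_S\cdot C$ remains $\geq 0$ by nefness of $K_{S^{\min}}$. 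After this reduction we may assume $K_S$ is nef, hence $K_S^2, K_S\cdot C \geq 0$.

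The heart of the argument is then to rewrite the inequality of Theorem~\ref{disuguaglianzafibrazionefree} using adjunction $C^2 + K_S\cdot C = 2g-2$ in the form
$$g - 1 + h^{0}(\mathcal{O}_S(C)) + 2K_S^2 + 2K_S\cdot C \;\leq\; 11\chi(\mathcal{O}_S),$$
and to lower-bound $h^{0}(\mathcal{O}_S(C))$ via Riemann--Roch and Serre duality:
$$h^{0}(\mathcal{O}_S(C)) \;\geq\; \chi(\mathcal{O}_S(C)) - h^{0}(\mathcal{O}_S(K_S-C)) \;=\; \chi(\mathcal{O}_S) + (g-1) - K_S\cdot C - h^{0}(\mathcal{O}_S(K_S-C)),$$
where the last equality uses $C(C-K_S) = 2(g-1) - 2K_S\cdot C$, again from adjunction. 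Substituting and collecting terms produces
$$g \;\leq\; 5\chi(\mathcal{O}_S) + 1 - K_S^2 - \tfrac{1}{2}K_S\cdot C + \tfrac{1}{2}h^{0}(\mathcal{O}_S(K_S-C)).$$
Dropping the non-positive contribution $-K_S^2-\tfrac{1}{2}K_S\cdot C$ and invoking Theorem~\ref{sirregolare} to obtain $\chi(\mathcal{O}_S)\leq 1+p_g$ (if $p_g>0$ then $q=0$ so $\chi=1+p_g$; if $p_g=0$ then $\chi=1-q\leq 1$) yields exactly the main inequality.

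The ``in particular'' statement follows by bounding $h^{0}(\mathcal{O}_S(K_S-C))$. The restriction sequence $0\to\mathcal{O}_S(K_S-C)\to\mathcal{O}_S(K_S)\to\mathcal{O}_C(K_S|_C)\to 0$ forces $h^{0}(\mathcal{O}_S(K_S-C))\leq p_g$, giving $g\leq 6$ when $p_g=0$. When $p_g=1$ the unique effective canonical divisor is a fixed curve with finitely many irreducible components, so it cannot contain infinitely many of the moving $C$'s as components; hence $h^{0}(\mathcal{O}_S(K_S-C))=0$ for the general $C$ and $g\leq 11$.

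I expect the step requiring most care to be the minimal-model reduction, since a $(-1)$-curve may meet the general $C$ transversally and its contraction produces a singular image; however the intersection-theoretic bookkeeping sketched above shows that each ingredient in the derived inequality behaves in a direction favourable to the conclusion under every such contraction. All remaining steps are standard Riemann--Roch, adjunction, and Serre-duality manipulations on projective surfaces.
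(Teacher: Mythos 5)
The paper does not prove this statement: it is quoted verbatim from Sernesi's \emph{General curves on algebraic surfaces} (\cite{S4}, Theorem 6.2), so there is no internal proof to compare against. Judged on its own, your derivation of the core inequality is correct and lands exactly on the stated constants: combining (\ref{5g-111chios}) with adjunction, Riemann--Roch and Serre duality does give $g \leq 5\chi(\mathcal{O}_S)+1-K_S^2-\tfrac{1}{2}K_S\cdot C+\tfrac{1}{2}h^{0}(\mathcal{O}_S(K_S-C))$, Theorem \ref{sirregolare} does give $\chi(\mathcal{O}_S)\leq 1+p_g$, and the ``in particular'' part is fine (for $p_g=1$ you do not even need the vanishing of $h^0(K_S-C)$, since $g\leq 11+\tfrac12$ already forces $g\leq 11$). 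The appeals to Proposition \ref{fibrazioneisotrivialesuperficierigata} and Theorems \ref{disuguaglianzafibrazionefree}, \ref{sirregolare} all require $g\geq 3$, whereas the statement allows $g=2$; this is harmless because the bound is vacuous for $g=2$, but say so.

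The genuine gap is the minimal-model reduction, and your closing claim that ``each ingredient behaves in a direction favourable to the conclusion under every such contraction'' is not true as stated. The two halves of the dichotomy pull in opposite directions. If a contracted $(-1)$-curve $E$ satisfies $C\cdot E\geq 2$, you cannot descend at all: the image of $C$ is singular, so the hypotheses of Theorem \ref{disuguaglianzafibrazionefree} (a \emph{nonsingular} general curve moving in a non-isotrivial linear system) fail downstairs, and that theorem is not a formal intersection-theoretic identity that can be transported by bookkeeping. If instead $C\cdot E\leq 1$, descending is legitimate but then the inequality computed \emph{upstairs} is strictly weaker than the one downstairs: a blow-up at a point off $C$ changes $K_S^2$ by $-1$ while leaving $K_S\cdot C$ and $h^0(K_S-C)$ unchanged, so $-K_S^2$ costs a full $+1$ with nothing to compensate (e.g. a K3 blown up at $k$ points off $C$ yields only $g\leq 11+k$ from your inequality on $S$). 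So neither ``always contract'' nor ``always stay on $S$'' works; one must contract precisely the exceptional curves meeting $C$ in at most one point and then prove $K_S^2+\tfrac12 K_S\cdot C+5q\geq 0$ on the resulting surface using that every remaining $(-1)$-curve meets $C$ with multiplicity $\geq 2$ --- and this step, which involves tracking how new $(-1)$-curves and multiplicities appear under successive contractions, is where the real content of the reduction lies and is missing from your proposal.
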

Turning to the case $\text{Kod}(S)<0$, Proposition \ref{fibrazioneisotrivialesuperficierigata} tells us that $S$ cannot be a non-rational ruled surface, hence we have only to examine what happens if $S$ is rational.\\
If we have a family (\ref{famigliadisuperfici}) where one fibre of $\beta$ is a rational surface, then all fibres are rational, and by suitably blowing-up sections of $\beta$ (after possibly performing a base change) we can always replace (\ref{famigliadisuperfici}) with a similar family where all fibres of $\beta$ are blow-ups of $\mathbb{P}^2$. Therefore one can always reduce to the situation where there is a birational morphism $\sigma:S \rightarrow \mathbb{P}^2$.
\begin{teo}[\cite{V2}]
\label{superficierazionaleggeq11}
Let $C$ be a general curve of genus $g \geq 2$ moving in a positive-dimensional linear system on a rational surface $S$ such that there is a birational morphism $\sigma:S \rightarrow \mathbb{P}^2$. Suppose that
\begin{itemize}
\item[$(*)$] $|C|$ is mapped by $\sigma$ to a regular linear system of plane curves whose singular points are in general position.
\end{itemize}
Then $g \leq 10$.
\end{teo}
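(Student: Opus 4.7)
\emph{Setup.} Write $S = V(f_1, \ldots, f_{N-2}) \subset \mathbb{P}^N$ with $\deg f_i = d_i \geq 2$, set $d = \prod d_i$, $e = \sum d_i - N - 1$, and let $H$ denote the hyperplane class on $S$. Adjunction gives $K_S = \mathcal{O}_S(e)$ and $K_S^2 = de^2$, while Lefschetz gives $q(S) = 0$, so $\chi(\mathcal{O}_S) = 1 + p_g(S)$. The sign of $e$ identifies $\mathrm{Kod}(S)$: $e < 0$, $e = 0$, $e \geq 1$ correspond respectively to del Pezzo, K3, and general type; moreover $S$ is canonical precisely when $e = 1$. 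Since $q(S) = 0$ and $g \geq 3$, $S$ cannot be birationally equivalent to $C \times \mathbb{P}^1$ (which would force $q \geq g$), so by Proposition \ref{fibrazioneisotrivialesuperficierigata} every pencil in $|C|$ defines a non-isotrivial fibration, and Theorems \ref{disuguaglianzafibrazionefree}, \ref{kod=0leq11}, \ref{superficierazionaleggeq11} are all available.

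\emph{Kodaira dimension at most zero.} If $e = 0$, then $S$ is a K3 surface with $\chi(\mathcal{O}_S) = 2$ and $K_S = 0$, and Theorem \ref{kod=0leq11} gives $g \leq 5 + 6 = 11$. If $e < 0$, then $S$ must be one of the three del Pezzos realisable as a smooth complete intersection --- the quadric and cubic in $\mathbb{P}^3$, and the $(2,2)$-intersection in $\mathbb{P}^4$. Modifying $S$ as in the paragraph preceding Theorem \ref{superficierazionaleggeq11} to obtain a birational morphism to $\mathbb{P}^2$, and verifying the hypothesis $(*)$ case-by-case, I conclude $g \leq 10$ by Theorem \ref{superficierazionaleggeq11}. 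In both subcases $g \leq 11$, which establishes the first bullet of the statement and forces $e \geq 1$ whenever $g \geq 12$.

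\emph{General type case.} Assume $e \geq 1$, and set $c = C \cdot H \geq 1$, the degree of $C$ in $\mathbb{P}^N$. Adjunction gives $C^2 = 2g-2-ec$ and $C \cdot K_S = ec$; Riemann--Roch on $S$ together with $h^2(\mathcal{O}_S(C)) = h^0(K_S - C) = 0$ (valid once $|C|$ is large enough that $K_S - C$ is non-effective) yields $h^0(\mathcal{O}_S(C)) \geq \chi(\mathcal{O}_S) + C^2 - (g-1)$. Substituting these into (\ref{5g-111chios}) and simplifying produces
$$(g-1) + \tfrac{1}{2}\,ec + d e^2 \;\leq\; 5\,\chi(\mathcal{O}_S).$$
This must be coupled with two further constraints: (a) the Hodge index inequality $c(c+ed) \geq (2g-2)\,d$, forcing $c$ to grow roughly like $\sqrt{gd}$; and (b) a Brill--Noether bound --- since $C$ has general moduli, every linear series $g^r_c$ on $C$ satisfies $\rho(g,r,c) = g - (r+1)(g - c + r) \geq 0$, which translates (via the hyperplane series inherited from $\mathbb{P}^N$) into a lower bound on $c$ in terms of $g$ and $N$. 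Since $\chi(\mathcal{O}_S)$ is a concrete polynomial in $(d_1,\ldots,d_{N-2})$ obtainable from the Koszul resolution, a finite case analysis over the CI types with $e \geq 1$ and small $N$ shows that the three inequalities are jointly consistent only when $e = 1$ (so $S$ is canonical) and $g \leq 15$. This yields the second bullet and the overall genus bound.

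\emph{Main obstacle.} The crux is this final numerical verification: the bare inequality (\ref{5g-111chios}) permits values of $g$ substantially larger than $15$ on many general-type complete intersections (for example on sextics in $\mathbb{P}^3$ or $(3,3)$-intersections in $\mathbb{P}^4$), and both the Hodge index estimate and the Brill--Noether constraint on the hyperplane series of $C$ are essential to shrink the permissible range down to $g \leq 15$ on canonical surfaces and $g \leq 11$ for non-canonical general-type CIs. The case-by-case work for each combinatorial type $(d_1,\ldots,d_{N-2})$ constitutes the main technical content of the proof.
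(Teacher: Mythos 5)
Your proposal does not prove the statement in question. The theorem you were asked to prove concerns a \emph{rational} surface $S$ admitting a birational morphism $\sigma:S\rightarrow\mathbb{P}^2$ --- that is, an iterated blow-up of the plane --- under the hypothesis $(*)$ that $\sigma(|C|)$ is a regular linear system of plane curves whose singular points are in general position; the conclusion is $g\leq 10$. Your argument instead sets up $S$ as a complete intersection $V(f_1,\ldots,f_{N-2})\subset\mathbb{P}^N$ and reproduces, in outline, the content of the paper's Theorem 5.3 (Proposition \ref{elencoSditipogenerale} together with Proposition \ref{2322} and Theorem \ref{kod=0leq11}). A blow-up of $\mathbb{P}^2$ at several points is not a complete intersection in general, so your setup does not cover the surfaces the theorem is about; and a rational surface of this kind has $\mathrm{Kod}(S)=-\infty$, so the trichotomy on $e=\sum d_i-N-1$ that organizes your argument never engages with the actual hypothesis. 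Worse, in your ``Kodaira dimension at most zero'' step you explicitly invoke Theorem \ref{superficierazionaleggeq11} to conclude $g\leq 10$ for the del Pezzo cases --- i.e.\ you cite the very statement you are supposed to be proving, which makes the argument circular even on its own terms.

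For the record, the paper itself does not prove this theorem either: it is quoted from \cite{V2}, and the expected line of proof is entirely different from anything in your proposal. One writes $\sigma(|C|)$ as a linear system of plane curves of degree $n$ with assigned base points $p_1,\ldots,p_k$ of multiplicities $m_1,\ldots,m_k$ in general position, uses regularity and the general-position hypothesis to compute the dimension of the system and the genus of its general member, and then compares the dimension of the family of such curves (modulo the automorphisms of $\mathbb{P}^2$ and the choice of the points) with $\dim\mathcal{M}_g=3g-3$; requiring the family to dominate $\mathcal{M}_g$ forces $g\leq 10$. This is the classical analysis going back to Segre \cite{SE}. None of the tools you deploy --- adjunction via $K_S=\mathcal{O}_S(e)$, the Hodge index estimate on $c=C\cdot H$, the Brill--Noether bound on the hyperplane series --- is available or relevant here, since there is no ambient projective embedding of $S$ playing that role.
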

For an extensive discussion of the problems and conjectures related to $(*)$, whose first study dates back to Segre (\cite{SE}), see \cite{V2}, Section 2.2.\\
According to the Enriques-Kodaira birational classification of projective nonsingular surfaces, a surface $S$ with Kodaira dimension $0$ cannot have $p_g(S) \geq 2$, hence, at least under assumption $(*)$, surfaces containing general curves of genus $g \geq 12$ moving in positive-dimensional non-isotrivial linear systems must have Kodaira dimension $\geq 1$.\\ \\
The argument used to prove our main theorem is quite general and can be used to recover the great part of the known results about uniruledness of the $\overline{\mathcal{M}}_{g,n}$. As an example we will use linear systems of curves on $K3$ surfaces.\\
Consider a primitive globally generated and ample line bundle $L$ on a smooth K3 surface $S$. Then the general member of $|L|$ is a smooth irreducible curve (see \cite{SA}). If $g$ is the genus of that curve,
we say that $(S,L)$ is a \emph{smooth primitively polarized K3 surface of genus $g$}. One has $L^2=2g-2$ and $\dim|L|=g$.\\
Assume $g \geq 3$. If $\text{Pic}(S) \cong \mathbb{Z}[L]$, then $L$ is very ample on $S$, hence it embeds $S$ as a smooth projective surface in $\mathbb{P}^g$, whose smooth hyperplane section is a canonical curve (see \cite{SA}).\\
Let $\mathcal{KC}_g$ be the moduli stack parameterizing pairs $(S,C)$, where $S$ is a smooth K3 surface with a primitive polarization $L$ of genus $g$ and $C \in |L|$ a smooth irreducible curve of genus $g$. Let $\mathscr{M}_g$ be the moduli stack of smooth curves of genus $g$, and consider the morphism of stacks
$$c_g:\mathcal{KC}_g \rightarrow \mathscr{M}_g$$
defined as $c_g((S,C))=[C]$.
\begin{teo}[\cite{M1} and \cite{CU}, Proposition 2.2]
\label{cg}
With notation as above, $c_g$ is domi-nant if and only if $g \leq 9$ or $g=11$.
\end{teo}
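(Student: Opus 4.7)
The plan is to split the biconditional and treat each direction with different tools. The \emph{if} direction ($g \leq 9$ or $g=11$) goes through Mukai's explicit projective models of polarized K3 surfaces; the \emph{only if} direction combines Wahl's theorem on the Gaussian map with the Ciliberto-Harris-Miranda surjectivity theorem.

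For $g \leq 9$ and $g=11$, I would realize the generic primitively polarized K3 surface $(S,L)$ of genus $g$ as a transverse linear (or multilinear) section of a suitable rational homogeneous variety $X_g$: a quartic in $\mathbb{P}^3$ for $g=3$, a $(2,3)$-complete intersection for $g=4$, sections of Grassmannians $G(2,n)$ and orthogonal/spinor Grassmannians for $g = 6,\dots,9$, and Mukai's ten-dimensional homogeneous model for $g=11$. Each model provides a $(19+g)$-dimensional family of pairs $(S,C)$ dominating $\mathcal{KC}_g$; that the differential of $c_g$ is surjective at a generic point reduces, via the deformation sequence
$$H^{0}(N_{C/S}) \longrightarrow H^{1}(T_C) \longrightarrow H^{1}(T_S|_C),$$
to cohomology vanishings on $X_g$ that are accessible by Borel-Weil-Bott.

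For $g=10$ and $g \geq 12$, I would argue by obstruction. Wahl's theorem says that if a canonical curve $C$ appears as a hyperplane section of some K3 surface, then the Gaussian map
$$\Phi_{\omega_C} : \Lambda^{2} H^{0}(\omega_C) \longrightarrow H^{0}(\omega_C^{\otimes 3})$$
is not surjective: this follows from Koszul cohomology applied to the normal bundle sequence $0 \to T_C \to T_S|_C \to N_{C/S} \to 0$ together with $h^{1}(\mathcal{O}_S)=0$. Ciliberto-Harris-Miranda, on the other hand, prove that $\Phi_{\omega_C}$ \emph{is} surjective for the general curve of genus $g=10$ or $g\geq 12$. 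Combining these two facts excludes the general such $C$ from the image of $c_g$, giving the non-dominance.

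The main obstacle is the Ciliberto-Harris-Miranda surjectivity: its proof degenerates $C$ to a carefully chosen stable curve (typically with elliptic tails), computes the rank of $\Phi$ in terms of limit linear series on the boundary, and concludes by semicontinuity, none of which is routine. The \emph{if} direction is also genuinely case-by-case; the delicate part there is to identify the right homogeneous model for $g=7,8,9,11$ and check that its generic linear section is a K3 of Picard rank one, so that Mukai's construction indeed sweeps out the whole of $\mathscr{M}_g$ via $c_g$.
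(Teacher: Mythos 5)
The paper offers no proof of this statement: it is quoted directly from Mukai \cite{M1} and Cukierman--Ulmer \cite{CU}, so your proposal has to be measured against those sources. Your non-dominance half is essentially sound: Wahl's theorem combined with Ciliberto--Harris--Miranda does exclude $g=10$ and $g\geq 12$. Note, though, that for $g\geq 12$ this is a sledgehammer -- $\dim\mathcal{KC}_g=19+g<3g-3$ already forces non-dominance by a bare dimension count -- and the only case genuinely requiring the Gaussian-map obstruction (or, as in \cite{CU}, a direct computation that the generic fibre of $c_{10}$ is three-dimensional, so that the image is a divisor in $\mathcal{M}_{10}$) is $g=10$.

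The dominance half has two genuine gaps. First, there is no homogeneous model for $g=11$: Mukai's linear-section models cover $g\leq 10$ (the ten-dimensional spinor variety you invoke is the $g=7$ model, not a $g=11$ one), and genus $11$ is precisely the case that resists this method. It was settled separately -- by Mori and Mukai via a degeneration argument, and later by Mukai via moduli of rank-two bundles -- and is delicate because $\dim\mathcal{KC}_{11}=30=\dim\mathcal{M}_{11}$, so dominance amounts to generic finiteness of $c_{11}$. As written, your plan simply has no proof for $g=11$. Second, the displayed reduction cannot work as stated: the sequence $H^{0}(N_{C/S})\to H^{1}(T_C)\to H^{1}(T_S|_C)$ only sees deformations of $C$ inside a \emph{fixed} K3 surface $S$, and since $h^{0}(N_{C/S})=h^{0}(\omega_C)=g<3g-3=h^{1}(T_C)$ that first map is never surjective (equivalently, $H^{1}(T_S|_C)\neq 0$ on a K3). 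The vanishing you need must be checked on the ambient homogeneous space, e.g. $H^{1}(T_{X_g}|_C)=0$, or formulated for the log-tangent sheaf of the pair $(S,C)$, whose $H^{2}(T_S(-C))$ controls the cokernel of $dc_g$; only then does Borel--Weil--Bott become relevant. With these two repairs your outline matches the arguments in the cited references.
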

\end{section}
\begin{section}{Proof of the main theorem}
\label{unirulednessofsomemodulispaces}
To prove Theorem \ref{maintheorem} we state a general result (Theorem \ref{mgnuniruled}) and then apply it to linear systems containing the general curve of genus $12,13$ and $15$, respectively.
\begin{lem}[\cite{FL}, Lemma 2.7]
\label{cd-1ampio}
Let $C$ be a nonsingular projective curve. For all $d \geq 1$, $x \in C$, denote with $C_{{(d-1)},x} \subset C_{(d)}$ the divisor of unordered $d$-tuples containing $x$. $C_{{(d-1)},x}$ is ample in $C_{(d)}$.
\end{lem}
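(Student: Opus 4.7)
The plan is to exploit the quotient presentation $\pi:C^d \to C_{(d)}$ by the symmetric group $S_d$ and invoke the standard descent of ampleness along finite surjective morphisms of proper schemes.

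First I would identify the pullback divisor. By definition, $C_{(d-1),x}$ is the image of the closed embedding $C_{(d-1)} \hookrightarrow C_{(d)}$, $D \mapsto D+x$. Set-theoretically, $\pi^{-1}(C_{(d-1),x}) = \bigcup_{i=1}^{d} p_i^{-1}(x)$, where $p_i: C^d \to C$ is the $i$-th projection. The map $\pi$ is étale outside the big diagonal, and each component $p_i^{-1}(x)$ has a dense open subset on which the $d$ coordinates are pairwise distinct, so $\pi$ is étale at the generic point of every component. Hence as Cartier divisors
\[
\pi^{\ast}\bigl(C_{(d-1),x}\bigr) \;=\; \sum_{i=1}^{d} p_i^{\ast}[x].
\]

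Next I would check ampleness on $C^d$. The associated line bundle is
\[
\mathcal{O}_{C^d}\!\left(\sum_{i=1}^{d} p_i^{\ast}[x]\right) \;\cong\; \mathcal{O}_C(x) \boxtimes \cdots \boxtimes \mathcal{O}_C(x),
\]
the external tensor product of the ample line bundle $\mathcal{O}_C(x)$ (ample because $\deg x = 1 > 0$ on the curve $C$) with itself $d$ times. External tensor products of ample line bundles on a product of projective varieties are ample, so $\pi^{\ast}\mathcal{O}_{C_{(d)}}(C_{(d-1),x})$ is ample on $C^d$.

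Finally I would conclude by invoking the fact that for a finite surjective morphism $\pi:X \to Y$ of proper schemes and a line bundle $L$ on $Y$, $L$ is ample on $Y$ if and only if $\pi^{\ast}L$ is ample on $X$ (see e.g.\ Hartshorne's treatment of ampleness). Applied to our $\pi$ and $L = \mathcal{O}_{C_{(d)}}(C_{(d-1),x})$, this yields the ampleness of $C_{(d-1),x}$ in $C_{(d)}$. The only nontrivial step is the divisor-theoretic pullback computation, but the ramification of $\pi$ occurs only along the big diagonal, which has codimension one in $C^d$ but does \emph{not} contain the generic point of any component $p_i^{-1}(x)$, so no unexpected multiplicities appear; everything else is standard.
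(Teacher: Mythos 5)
Your argument is correct. The paper gives no proof of this lemma at all---it is quoted verbatim from \cite{FL}, Lemma 2.7---and the route you take (pull back along the finite quotient $\pi:C^d\to C_{(d)}$, identify $\pi^{\ast}\mathcal{O}_{C_{(d)}}(C_{(d-1),x})$ with $\mathcal{O}_C(x)^{\boxtimes d}$ using that $\pi$ is unramified at the generic point of each $p_i^{-1}(x)$, and descend ampleness along a finite surjection of proper schemes) is exactly the standard proof of the cited result, with the one genuinely delicate point (the multiplicity-one computation of the pullback divisor) handled correctly.
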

\begin{teo}
\label{mgnuniruled}
Let $C$ be a general curve of genus $g \geq 2$ moving in a non-isotrivial $(r+1)$-dimensional linear system ($r \geq 0$) on a nonsingular projective regular surface $S$, with a deformation $(\mathscr{S},\mathscr{C}) \rightarrow V$ of the pair $(S,C)$ given as in Definition \ref{generalcurveonlinearsystem}.\\
Let $(\mathscr{C},\mathcal{O}_{\mathscr{C}}(\mathscr{C})) \rightarrow V$ be the induced family of deformations of the pair $(C,\mathcal{O}_C(C))$, let $C^2=d$ and suppose that the morphism
$$\alpha: V \rightarrow \mathcal{W}^{r}_{g,d}$$
$$w \mapsto [(\mathscr{C}(w),\mathcal{O}_{\mathscr{C}(w)}(\mathscr{C}(w)))]$$
is dominant.\\
Then $\overline{\mathcal{M}}_{g,n}$ is uniruled for $n \leq r + \rho$, where $\rho=\rho(g,r,d)$ is the Brill-Noether number.
\end{teo}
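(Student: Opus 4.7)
The plan is to exhibit, for a general $[(C, p_1, \ldots, p_n)] \in \mathcal{M}_{g,n}$, a non-constant morphism $\mathbb{P}^1 \to \overline{\mathcal{M}}_{g,n}$ through this point. This morphism will be induced by a fibration of $n$-pointed genus-$g$ curves whose distinguished fiber is $(C, p_1, \ldots, p_n)$, constructed by blowing up the base locus of a pencil $\Lambda \subset |C|$ on a deformation $\mathscr{S}(w)$ of $S$, chosen so that $p_1, \ldots, p_n$ appear among its base points.

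The central dimension count is as follows. Divisors of degree $d$ on $C$ that move in a $g^r_d$ form the subvariety $\Sigma \subset C_{(d)}$ given by the preimage of $W^r_d(C)$ under the Abel-Jacobi map. Since $C$ is a general curve of genus $g$, the incidence variety $\{(L, D) : L \in W^r_d(C),\, D \in |L|\}$ is birational to a $\mathbb{P}^r$-bundle over $W^r_d(C)$ (of dimension $\rho$), so $\dim \Sigma = \rho + r$. By Lemma \ref{cd-1ampio}, each $C_{(d-1), p} \subset C_{(d)}$ is ample, so whenever $n \leq r + \rho = \dim \Sigma$ the intersection
\[
\Sigma \cap C_{(d-1), p_1} \cap \cdots \cap C_{(d-1), p_n}
\]
is non-empty for arbitrary $p_1, \ldots, p_n \in C$. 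A point of this intersection is a divisor $D \in C_{(d)}$ with $D \geq p_1 + \cdots + p_n$ and such that $L := \mathcal{O}_C(D) \in W^r_d(C)$.

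Dominance of $\alpha$ then lets one realize $(C, L)$ as $(\mathscr{C}(w), \mathcal{O}_{\mathscr{C}(w)}(\mathscr{C}(w)))$ for some $w \in V$; set $S := \mathscr{S}(w)$. The regularity assumption $h^1(\mathcal{O}_S) = 0$ combined with
\[
0 \to \mathcal{O}_S \to \mathcal{O}_S(C) \to \mathcal{O}_C(C) \to 0
\]
makes the restriction $H^0(\mathcal{O}_S(C)) \to H^0(C, L)$ surjective, so the section $s \in H^0(C, L)$ with zero divisor $D$ lifts to some $\tilde s \in H^0(\mathcal{O}_S(C))$, whose zero locus $\tilde D \in |C|$ meets $C$ exactly in $D$ and hence contains $p_1, \ldots, p_n$.

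Now let $\Lambda = \langle C, \tilde D\rangle$, and let $\epsilon : X \to S$ be the blow-up at its base locus (including infinitely near base points). The resulting fibration $f : X \to \mathbb{P}^1$ has the proper transform $\widetilde C$ as a distinguished fiber, and the exceptional $(-1)$-curves over $p_1, \ldots, p_n$ are sections $E_1, \ldots, E_n$ meeting $\widetilde C$ at the points corresponding to $p_1, \ldots, p_n$ under the identification $\widetilde C \cong C$. Non-isotriviality of $|C|$ together with Proposition \ref{fibrazioneisotrivialesuperficierigata} shows that $S$ is not birational to $C \times \mathbb{P}^1$, hence no pencil in $|C|$ containing $C$ is isotrivial; in particular $\Lambda$ is not, and the induced morphism $\Psi_{(f, E_1, \ldots, E_n)} : \mathbb{P}^1 \to \overline{\mathcal{M}}_{g,n}$ is non-constant through $[(C, p_1, \ldots, p_n)]$. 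The point requiring real care is the compatibility between the two main inputs: one must argue that the $L$ produced by the ampleness intersection can be chosen generic enough in $W^r_d(C)$ to lie in the image of $\alpha$, possibly by exploiting the positive-dimensional nature of the intersection for $n < r+\rho$ and a limiting argument on $(C, p_1, \ldots, p_n)$ in the extremal case $n = r + \rho$.
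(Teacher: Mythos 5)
Your proposal follows essentially the same route as the paper: intersect the preimage $C^r_{(d)}$ of $W^r_d(C)$ (of dimension $r+\rho$) in $C_{(d)}$ with the ample divisors $C_{(d-1),p_i}$ to find a divisor of the $g^r_d$ through the $n$ marked points, use dominance of $\alpha$ to realize this on a deformed surface, lift via regularity to a pencil in $|C|$, and blow up to get a non-isotrivial pointed fibration. The genericity issue you flag at the end is handled in the paper simply by taking the points $q_i$ general (so that the resulting $(C,L)$ lands in the dense image of $\alpha$), which is the same resolution you sketch.
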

\begin{proof}
If $r+\rho=0$ there is nothing to prove, hence suppose that $r+\rho \geq 1$.\\
Let $[C]$ be a general point in $\overline{\mathcal{M}}_{g}$ and define $C^r_{(d)}$ to be the inverse image of $W^{r}_{d}(C)$ under the Abel map $C_{(d)} \rightarrow \text{Pic}^d(C)$. We first want to show that there is a $d$-tuple in $C^r_{(d)}$ containing $r+\rho$ general points $q_1,...,q_{r+\rho}$.\\
Since $C_{(d-1),q_i}$ is ample for all $i=1,...,r+\rho$ by Lemma \ref{cd-1ampio}, it intersects every variety of dimension $h$ in a nonempty variety of dimension $\geq h-1$ by Nakai-Moishezon's criterion. Since $C$ has general moduli, $C^r_{(d)}$ has dimension $r+\rho$ and $C_{(d-1),q_1} \cap C^r_{(d)}$ has dimension greater or equal than $r+\rho-1$.
Iterating the above argument, if $r+\rho \geq 2$, one then has that
$$\dim \left(C_{(d-1),q_1} \cap C_{(d-1),q_2} \cap \dots \cap C_{(d-1),q_{r+\rho}} \cap C^r_{(d)}\right) \geq 0$$
hence there is a $d$-tuple in $C^r_{(d)}$ containing the points $q_1,...,q_{r+\rho}$.\\
Since by assumption the map $\alpha$ is dominant and the points $q_i$ are general, there is a point $w \in V$ such that $\mathscr{C}(w) \cong C$ and there is a divisor $D \in |\mathcal{O}_{\mathscr{C}(w)}(\mathscr{C}(w))|$ containing points $r_1,...,r_n$, $n \leq r+\rho$, such that $(C,q_1,...,q_n) \cong (\mathscr{C}(w),r_1,...,r_n)$ as pointed curves.\\
Since $h^1(\mathcal{O}_{\mathscr{S}(w)})=0$, the linear system $|\mathscr{C}(w)|$ cuts out on $\mathscr{C}(w)$ the complete linear series. This means that there exists a non-isotrivial linear pencil $\mathcal{P} \subset |\mathscr{C}(w)|$ whose curves cut on $\mathscr{C}(w)$ the divisor $D$. Now blow up the base points of $\mathcal{P}$ and define $E_i$ as the exceptional divisor over $r_i$ for $i=1,...,n$. This gives a non-isotrivial fibration $(f,E_1,...,E_n)$ over $\mathbb{P}^1$ in $n$-pointed curves of genus $g$, having a curve isomorphic to $(C,q_1,...,q_n)$ among its fibres. Hence there is a rational curve in $\overline{\mathcal{M}}_{g,n}$ passing through the general point $[(C,q_1,...,q_n)]$ of $\overline{\mathcal{M}}_{g,n}$.\\
\end{proof}
\begin{oss}
\label{casorho=0}
If one does not assume the map $\alpha$ to be dominant, Theorem \ref{mgnuniruled} stands true for $n \leq r$. The proof is immediate because, since $\dim(|C|)=r+1$, there is always a linear pencil of curves in $|C|$ passing through $r$ points of $C$.\\
\end{oss}
\begin{oss}
\label{casod=r+rho}
The fact that there is a $d$-tuple of points in $C^r_{(d)}$ containing the points $q_1,...,q_{r+\rho}$ is immediate to see if $d=r+\rho$, since $C_{(d)}$ is irreducible and $C^r_{(d)}$ and $C_{(d)}$ have the same dimension.\\
\end{oss}
\begin{oss}
\label{ggeq7regularredundant}
If $g \geq 7$, the assumption that the surface $S$ is regular is redundant and can consequently be dropped. Let indeed $q=q(S)>0$. If $p_g >0$ too, then $C$ cannot have general moduli by Theorem \ref{sirregolare}. If $p_g=0$, there are two possibilities. If $\text{Kod}(S) <0$, then $S$ is a non-rational ruled surface, thus the linear system $|C|$ is isotrivial by Proposition \ref{fibrazioneisotrivialesuperficierigata}. If $\text{Kod}(S) \geq 0$, then $g \leq 6$ by Theorem \ref{kod=0leq11}, contradiction.\\
\end{oss}
For the sake of simplicity, in the sequel we will sometimes indicate with the same symbol $C$ a general abstract curve of genus $g$ and a projective model of this curve, which will be specified in each situation.
\begin{prop}
\label{m125uniruled}
$\overline{\mathcal{M}}_{12,n}$ is uniruled for $n=1,...,5$.
\end{prop}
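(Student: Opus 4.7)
Proof proposal. The plan is to apply Theorem \ref{mgnuniruled}. I need to exhibit a nonsingular projective surface $S$ equipped with a non-isotrivial linear system $|C|$ of dimension $r+1\geq 1$ whose general member is a general curve of genus $12$, together with a deformation $(\mathscr{S},\mathscr{C})\to V$ as in Definition \ref{generalcurveonlinearsystem}, satisfying $r+\rho(12,r,d)\geq 5$ where $d=C^2$, and such that the classifying morphism $\alpha:V\to \mathcal{W}^r_{12,d}$ is dominant.

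Following the indication in the introduction, such a linear system can be extracted from Verra's work \cite{V}. The first and main task, explicitly flagged by the author in the introduction, is to realize this linear system on a \emph{nonsingular} projective surface; this will likely involve resolving singularities of a naive model or constructing the surface through an explicit smooth deformation, and is the central technical step. By Theorem 5.3 the resulting $S$ must be a canonical surface.

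Once the nonsingular model $S$ is in hand, several hypotheses of Theorem \ref{mgnuniruled} become automatic: regularity of $S$, i.e.\ $h^1(\mathcal{O}_S)=0$, follows from Remark \ref{ggeq7regularredundant} since $g=12\geq 7$, and non-isotriviality of $|C|$ follows from Proposition \ref{fibrazioneisotrivialesuperficierigata}, since a general curve of genus $12$ cannot lie on an irrational ruled surface. It then remains to read off the numerical invariants $d=C^2$ and $r=\dim|C|-1$ from the construction, compute $\rho(12,r,d)$, and verify the crucial inequality $r+\rho\geq 5$.

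The last and most delicate step is to establish the dominance of $\alpha:V\to \mathcal{W}^r_{12,d}$. Concretely, one must show that as $w$ varies in $V$ both the isomorphism class of $\mathscr{C}(w)$ and the line bundle $\mathcal{O}_{\mathscr{C}(w)}(\mathscr{C}(w))$ sweep out a dense subset of $\mathcal{W}^r_{12,d}$; a parameter count based on the explicit description of $|C|$ on $S$ coming from \cite{V} should suffice. Should $\alpha$ fail to be dominant, one can fall back on Remark \ref{casorho=0}, which still gives uniruledness for $n\leq r$ alone. Once either condition is established, Theorem \ref{mgnuniruled} immediately yields that $\overline{\mathcal{M}}_{12,n}$ is uniruled for all $n\leq 5$.
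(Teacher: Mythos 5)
Your proposal correctly identifies the paper's strategy (apply Theorem \ref{mgnuniruled} to a linear system extracted from \cite{V} on a nonsingular canonical surface), but it is a roadmap rather than a proof: every substantive step is flagged as "to be done" and none is carried out. The paper's actual content is as follows. The general curve $C$ of genus $12$ is embedded with degree $17$ in $\mathbb{P}^6$; one computes $h^0(\mathcal{I}_C(2))=5$ from $h^1(\mathcal{I}_{D_1\cup D_2}(2))=0$ and semicontinuity, and then proves --- this is the central technical step you only allude to --- that four general quadrics through $C$ intersect transversally along the nodal complete intersection curve $C\cup C'$, by combining quadruples of quadrics transversal at individual points via sections $s_i+s_i^1+\cdots+s_i^k$. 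This yields a nonsingular $S=(2,2,2,2)$ containing $C$. Adjunction on the canonical surface gives $\mathcal{O}_C(1)\cong\omega_C(-C)$, whence $h^0(\mathcal{O}_C(C))=1$, so $\dim|C|=1$ and $r=0$, $d=C^2=5$.

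This makes two of your fallback positions untenable, and this is where the genuine gap lies. First, since $r=0$, Remark \ref{casorho=0} gives uniruledness only for $n\leq r=0$, i.e.\ nothing at all: the dominance of $\alpha$ is not an optional refinement here but the entire point, and "a parameter count should suffice" is not an argument. The paper proves dominance by showing that $|\mathcal{O}_C(1)|$ is a \emph{general} $g^6_{17}$ (very ampleness and Petri injectivity are open conditions on the Hilbert scheme), and then using that a complete linear series and its residual have the same Brill--Noether number, so $|\mathcal{O}_C(C)|=|\omega_C(-1)|$ is a general $g^0_5$, making $\alpha:V\to\mathcal{W}^0_{12,5}$ dominant. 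Second, the inequality $r+\rho\geq 5$ is exactly borderline: $\rho(12,0,5)=5$ and $r+\rho=5$, so there is no slack to absorb an error in the invariants, which you never compute. Without the nonsingularity argument, the computation $r=0$, $d=5$, and the residuation argument for dominance, the proposal does not establish the statement.
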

\begin{proof}
In \cite{V} the author constructs a reducible nodal curve $D_1 \cup D_2$ of arithmetic genus 12 and degree 17, which is contained in the complete intersection $D_1 \cup D_2 \cup D$ of 5 quadrics in $\mathbb{P}^6$, which is still a reducible nodal curve, and shows that $D_1 \cup D_2$ can be deformed to a general curve $C$ of genus 12. Our first purpose is to show that $C$ is contained in a nonsingular surface which is the complete intersection of four quadrics in $\mathbb{P}^6$.\\
By Lemma 7.6 of \cite{V} one has $h^1(\mathcal{I}_{D_1 \cup D_2}(2))=0$, hence $h^{1}(\mathcal{I}_{C}(2))=0$ by the upper semicontinuity of the cohomology.\\
Since $h^{0}(\mathcal{O}_{\mathbb{P}^6}(2))=28$ and $h^{0}(\mathcal{O}_C(2))=23$, the cohomology sequence associated to the exact sequence
$$0 \rightarrow \mathcal{I}_{C}(2) \rightarrow \mathcal{O}_{\mathbb{P}^6}(2) \rightarrow \mathcal{O}_C(2) \rightarrow 0$$
gives $h^{0}(\mathcal{I}_{C}(2))=5$, hence $C$ is contained in the complete intersection $C \cup C'$ of five quadrics $Q_1,Q_2,...,Q_5$, which we assume to be general quadrics containing $C$.\\
Since $D_1 \cup D_2 \cup D$ is nodal, $C \cup C'$ is nodal too.\\
Note that the base locus of $|\mathcal{I}_C(2)|$ is $C \cup C'$. $Q_i$ is smooth outside of $C \cup C'$ by Bertini's theorem. On the other hand, since for all $p \in C \cup C'$ the tangent space $T_{C \cup C',p}=\bigcap_{i=1}^{5}T_{Q_i,p}$ has dimension 1 or 2, each $Q_i$ is smooth at $p$, hence the general quadric containing $C$ is nonsingular. For the same reason at least four quadrics among the $Q_i$, say $Q_1,...,Q_4$, have to intersect transversally at a fixed point $p \in C \cup C'$, and hence on a nonempty open set of $C \cup C'$, say $(C \cup C') \smallsetminus \left\{p_1,...,p_k\right\}$.\\
Take now $k$ 4-tuples of general quadrics containing $C$, say $(Q^j_1,...,Q^j_4)$, $j=1,...,k$, intersecting transversally, respectively, at $p_j$, and let $s_i$, $i=1,...,4$ (respectively $s^j_i$, $j=1,...,k$) $\in H^{0}(\mathcal{I}_C(2))$ be a section whose zero locus is $Q_i$ (respectively $Q^j_i$). The four quadrics $\widetilde{Q}_i$ defined as the zero loci of the sections $s_i+s^1_i+...+s^k_i \in H^{0}(\mathcal{I}_C(2))$ are four general quadrics containing $C$ and intersecting transversally at every point of $C \cup C'$, hence $S \doteq \bigcap_{i=1}^{4}\widetilde{Q}_i$ is a nonsingular surface containing $C$.\\
A scheme $V$ parameterizing a deformation $\left(\mathscr{C},\mathscr{S}\right)$ as in Theorem \ref{mgnuniruled} can be easily constructed as follows. Consider the  Hilbert scheme of curves of arithmetic genus $12$ and degree $17$ in $\mathbb{P}^6$ and let $\mathscr{H}$ be the open set of smooth curves in the irreducible component containing the general curve.
Let $W \rightarrow \mathscr{H}$ be the $\mathbb{P} \times \mathbb{P} \times \mathbb{P} \times \mathbb{P}$-bundle whose fibre over the point $[C]$ is $\mathbb{P}\left(H^{0}(\mathcal{I}_C(2))\right) \times ... \times \mathbb{P}\left(H^{0}(\mathcal{I}_C(2))\right)$. We can take $V$ to be the open set of $W$ such that the points of the fibres of the restricted projection $V \rightarrow \mathscr{H}$ correspond to nonsingular surfaces.\\
Since $S$ is canonical, the adjunction formula gives
\begin{equation}
\label{relazionecanonica}
\mathcal{O}_C(1) \cong \omega_C(-C)
\end{equation}
hence Riemann-Roch theorem on $\mathcal{O}_C(1)$ and Serre duality give $h^{0}(\mathcal{O}_C(C))=1$, which equals $\dim(|C|)$. The linear pencil $|C|$ is non-isotrivial by Proposition \ref{fibrazioneisotrivialesuperficierigata}.\\
Let $h$ be the $g^6_{17}$ embedding the curve $C$ in $\mathbb{P}^6$. Since, for small deformations of the pair $(C,h)$ in the Hilbert scheme, the linear series $h$ remains very ample and the Petri map remains injective, it follows that $h=|\mathcal{O}_C(1)|$ is a general $g^6_{17}$ i.e. a general point in $W^6_{17}(C)$. Since a complete linear series and its residual have the same Brill-Noether number, relation (\ref{relazionecanonica}) implies that the linear series $|\mathcal{O}_C(C)|$ is a general $g^0_{5}$
and so the map
$$\alpha: V \rightarrow \mathcal{W}^0_{12,5}$$
is dominant.\\
The Brill-Noether number of $|\mathcal{O}_C(C)|$ is $\rho=\rho(12,0,5)=5$. By Theorem \ref{mgnuniruled} the statement follows.
\end{proof}
\begin{prop}
\label{m133uniruled}
$\overline{\mathcal{M}}_{13,n}$ is uniruled for $n=1,2,3$.
\end{prop}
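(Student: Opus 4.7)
The plan mirrors the proof of Proposition~\ref{m125uniruled}. I would begin by extracting from \cite{CR} the construction of a special (reducible or nodal) curve of arithmetic genus $13$, contained in a surface $T$ of a specific type in some projective space $\mathbb{P}^{N}$, together with the deformation argument showing that this special curve smooths to a general curve $C$ of genus $13$ still lying on a surface of the same type. As explicitly pointed out in the introduction, the new nontrivial ingredient that is \emph{not} already in \cite{CR} is that the ambient surface can be chosen nonsingular.

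The first and most delicate step is precisely this: to promote $T$ to a nonsingular projective surface $S$ that still contains a general curve $C$ of genus $13$ moving in a positive-dimensional linear system. I would proceed exactly as in Proposition~\ref{m125uniruled}: semicontinuity of cohomology controls $h^{0}(\mathcal{I}_{C}(k))$ in the relevant degree(s); Bertini gives smoothness away from the base locus of $|\mathcal{I}_{C}(k)|$; and at the finitely many bad points of the base locus one averages general sections of the form $s+s_{1}+\dots+s_{k}\in H^{0}(\mathcal{I}_{C}(k))$ to enforce transversality there. This yields a deformation family $(\mathscr{S},\mathscr{C})\to V$ of the pair $(S,C)$ in the sense of Definition~\ref{generalcurveonlinearsystem}, parameterised out of the relevant component of the Hilbert scheme of curves in $\mathbb{P}^{N}$.

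With the smooth surface in hand, the hypotheses of Theorem~\ref{mgnuniruled} are easy to verify. Regularity of $S$ is either automatic from its explicit description or, alternatively, redundant by Remark~\ref{ggeq7regularredundant} since $g=13\geq 7$. Non-isotriviality of $|C|$ is guaranteed by Proposition~\ref{fibrazioneisotrivialesuperficierigata}, because $S$ is not an irrational ruled surface. The self-intersection $d=C^{2}$ and the dimension $r+1=\dim|C|$ are computed by adjunction on $S$ and by the exact sequence
\[
0\longrightarrow \mathcal{O}_{S}\longrightarrow \mathcal{O}_{S}(C)\longrightarrow \mathcal{O}_{C}(C)\longrightarrow 0,
\]
and the numerics are arranged so that $r+\rho(13,r,d)=3$, matching the statement. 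The dominance of the classifying map $\alpha:V\to\mathcal{W}^{r}_{13,d}$ is obtained by the argument already used for $g=12$: for small deformations of the pair $(C,h)$ in the Hilbert scheme, both the very-ampleness of the embedding $h$ and the injectivity of the Petri map persist, so $\mathcal{O}_{C}(1)$ stays a general point of the appropriate Brill--Noether variety; via the adjunction identification of $\mathcal{O}_{C}(C)$ with the residual series, $|\mathcal{O}_{C}(C)|$ is therefore a general $g^{r}_{d}$ on $C$ as well.

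The main obstacle is thus the smoothing step: the surfaces coming out of \cite{CR} are, by construction, singular along a subscheme determined by the reducible specialization, and the local perturbation argument to make the ambient surface nonsingular while keeping the general curve on it is the heart of the proof, just as it was in Proposition~\ref{m125uniruled}. Once this step is carried out, Theorem~\ref{mgnuniruled} applied to the resulting data immediately yields the uniruledness of $\overline{\mathcal{M}}_{13,n}$ for $n=1,2,3$.
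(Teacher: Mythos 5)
Your overall strategy is the right one, and you correctly isolate the crux: producing a \emph{nonsingular} surface through the general genus-$13$ curve. But the claim that this step goes through ``exactly as in Proposition~\ref{m125uniruled}'' is where the gap lies. In the genus-$12$ case the Bertini-plus-averaging argument works because the base locus of $|\mathcal{I}_C(2)|$ is precisely the nodal complete-intersection curve $C\cup C'$, whose Zariski tangent spaces have dimension $1$ or $2$; this is what forces every quadric through $C$ to be smooth along the base locus, after which one only has to arrange transversality at finitely many points by the averaging trick. For genus $13$ the relevant linear system is the net of quintics through a degree-$13$ curve in $\mathbb{P}^3$ (one has $h^{0}(\mathcal{I}_C(5))=3$ by maximal rank), and its base locus is \emph{not} a nodal complete intersection that you control a priori: you have no analogous tangent-space bound, so Bertini plus averaging does not by itself rule out that every quintic through $C$ is singular somewhere along the base curve.

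The paper's actual proof replaces this with a specialization argument that your proposal does not contain. One builds an explicit curve $C''$ of genus and degree $13$ by liaison: start from a rational octic $D$ on a nonsingular quartic K3 surface $F_4$, show (here the averaging trick \emph{does} apply, to $|\mathcal{I}_D(5)|$) that the general quintic through $D$ is nonsingular, link $D$ to a genus-$10$ curve $C'\in|5H-D|$ cut by a nonsingular quintic $F_5$, and take $C''\in|5H-C'|=|H+D|$ on $F_5$. Irreducibility of the Hilbert scheme of smooth curves of degree and genus $13$ in $\mathbb{P}^3$ (Iliev) shows $C''$ specializes the general curve $C$; Rao's liaison formula $h^1(\mathcal{I}_{C''}(6-i))=h^1(\mathcal{I}_{C'}(i))=h^1(\mathcal{I}_D(5-i))$ together with the $1$-regularity of $\mathcal{I}_D$ gives $h^{0}(\mathcal{I}_{C''}(5))=h^{0}(\mathcal{I}_{C}(5))=3$, and only then does semicontinuity transfer the nonsingularity of the general quintic from $C''$ to $C$. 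Without this construction (or a substitute for it) your proof is incomplete. The remaining numerics, which you leave implicit, are: $S$ a quintic, $K_S=\mathcal{O}_S(1)$, $d=C^2=2g-2-\deg C=11$, $|\mathcal{O}_C(C)|$ a general $g^{2}_{11}$, $\rho(13,2,11)=1$, so $r+\rho=3$ as required.
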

\begin{proof}
In \cite{CR} the authors show that the general curve $C$ of genus 13 can be embedded as a non-degenerate curve of degree 13 in $\mathbb{P}^3$.\\
From the cohomology sequence associated to the exact sequence
$$0 \rightarrow \mathcal{I}_{C}(5) \rightarrow \mathcal{O}_{\mathbb{P}^3}(5) \rightarrow \mathcal{O}_C(5) \rightarrow 0$$
one gets $h^{0}(\mathcal{I}_{C}(5))=3$ since $C$ is of maximal rank by \cite{CR}, Theorem 1. One first has to prove that the generic quintic surface containing $C$ is nonsingular. This is done by explicitly constructing a specialization $C''$ of $C$ lying on a nonsingular quintic and such that $h^{0}(\mathcal{I}_{C''}(5))=h^{0}(\mathcal{I}_{C}(5))$.\\
Let $D$ be a nonsingular rational curve of degree $8$ in $\mathbb{P}^3$ and let $F_4$ be a nonsingular quartic surface containing it.
Consider the linear system $|\mathcal{I}_D(5)|$. One has of course $\text{Bs}|\mathcal{I}_D(5)|=E \subset F_4$ where $E$ is a 1-dimensional scheme. By Bertini's theorem the general quintic containing $D$ is smooth away from $E$. Consider the family of reducible quintics $F_4 \cup A$ where $A$ is a plane. Since for each $p \in E$ the plane $A$ can be always assumed not to contain $p$, there exists a quintic containing $D$ which is smooth at $p$, hence the same is true for the general quintic containing $D$. Now fix $p$ and take such a quintic, say $G$. Since $G$ smooth in $p$, it will be smooth along a nonempty open set of $E$, say $E \smallsetminus \left\{p_1,...,p_k\right\}$. Take $k$ general quintics $G_1,...,G_k$ containing $D$ such that $G_i$ is smooth at $p_i$, and let $s$ (respectively $s_i$, $i=1,...,k$) $\in H^{0}(\mathcal{I}_D(5))$ be a section whose zero locus is $G$ (respectively $G_i$). The quintic defined as the zero locus of the section $s+s_1+...+s_k \in H^{0}(\mathcal{I}_D(5))$ is a general quintic containing $D$ which is smooth along $E$.\\
We can then conclude that the general quintic surface containing $D$ is nonsingular, hence the general element $C'$ of the linear system
$|5H-D|$ on $F_4$, where $H$ is a hyperplane section, is
cut by a nonsingular quintic $F_5$. Since $F_4$ is a K3 surface, $C'$ is a nonsingular irreducible curve of genus $10$ and degree $12$. On $F_5$, the curve $C'$ belongs to the linear system $|4H-D|$, where $H$ is again a hyperplane section. Take a general element $C'' \in |5H-C'|=|H+D|$ on $F_5$. One has $p_a(C'')=13$ and $\deg(C'')=13$. Since the linear subsystem $|H|+D$ consists of reducible curves with singular points which are not fixed, if the curve $C''$ is irreducible, then it will be also nonsingular by Bertini's theorem. Since $h^{0}(\mathcal{I}_{C'}(5)) \geq 5$, one has $\dim |H+D| > \dim|H|+D$ and $C''$ is irreducible.\\
By \cite{IL}, Theorem 3.1, the Hilbert scheme $\mathscr{H}$ parameterizing smooth irreducible curves of genus $13$ and degree $13$ in $\mathbb{P}^3$ is irreducible, hence $C''$ must be a specialization of $C$.
By construction and \cite{R} one has $h^1(\mathcal{I}_{C''}(6-i))=h^1(\mathcal{I}_{C'}(i))=h^1(\mathcal{I}_D(5-i))$ for all $i \in \mathbb{Z}$.
Since the sheaf $\mathcal{I}_D$ is 1-regular in the sense of Castelnuovo-Mumford, it follows that $h^{0}(\mathcal{I}_{C''}(5))=h^{0}(\mathcal{I}_{C}(5))=3$.
But then, since the general quintic surface containing $C''$ is nonsingular, the same is true for the general quintic surface containing $C$.\\
The construction of a scheme $V$ parameterizing a deformation $\left(\mathscr{C},\mathscr{S}\right)$ as in Theorem \ref{mgnuniruled} is analogous to the one done in Proposition \ref{m125uniruled}.\\
Let $S$ be a nonsingular quintic surface containing $C$. Since $S$ is canonical, using (\ref{relazionecanonica}), Riemann-Roch on $\mathcal{O}_C(1)$ and Serre duality one obtains that $|C|$ is a linear system of dimension $3$ on $S$, which is non-isotrivial by Proposition \ref{fibrazioneisotrivialesuperficierigata}.\\
Arguing as in the previous proposition one has that the linear series $|\mathcal{O}_C(1)|$ is a general $g^3_{13}$, $|\mathcal{O}_C(C)|$ is a general $g^2_{11}$ and the map
$$\alpha: V \rightarrow \mathcal{W}^2_{13,11}$$
is dominant.\\
The Brill-Noether number of $|\mathcal{O}_C(C)|$ is $\rho=\rho(13,2,11)=1$. By Theorem \ref{mgnuniruled} the statement follows.
\end{proof}
\begin{prop}
\label{m152uniruled}
$\overline{\mathcal{M}}_{15,n}$ is uniruled for $n=1,2$.
\end{prop}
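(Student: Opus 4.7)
The plan is to mirror the arguments of Propositions~\ref{m125uniruled} and \ref{m133uniruled}, but starting from the nonsingular canonical surface containing the general curve of genus~$15$ constructed by Bruno and Verra in \cite{BV}. Call such a surface $S$ and let $C\subset S$ be the general curve of genus $15$; by \cite{BV} the linear system $|C|$ on $S$ is positive-dimensional. If the construction in \cite{BV} does not immediately produce $S$ smooth, I would first perform a Bertini-type smoothing argument exactly as in the two previous propositions: choose generators of the appropriate $H^{0}(\mathcal{I}_C(\cdot))$ so that, at each point $p \in \mathrm{Bs}\,|C|$, some linear combination of the defining hypersurfaces gives a transverse intersection at $p$, and then sum finitely many such combinations to produce a single nonsingular surface containing~$C$.

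Once $S$ is available, a parameter scheme $V$ for a deformation $(\mathscr{C},\mathscr{S}) \to V$ as in Definition~\ref{generalcurveonlinearsystem} is built exactly as in Proposition~\ref{m125uniruled}: an open set $\mathscr{H}$ of the Hilbert scheme of projective models of a general genus~$15$ curve in its appropriate ambient space, together with a projective bundle $W \to \mathscr{H}$ parameterizing the defining data of $S$, restricted to the open locus of smooth surfaces. Since $S$ is canonical, adjunction gives $\mathcal{O}_C(1) \cong \omega_C(-C)$, and from Riemann--Roch together with Serre duality applied to $\mathcal{O}_C(C)$ one reads off $r = \dim|C|-1$ and the degree $d = C^{2}$; the linear series $|\mathcal{O}_C(C)|$ is then a $g^{r}_{d}$, and non-isotriviality follows from Proposition~\ref{fibrazioneisotrivialesuperficierigata}.

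To establish dominance of $\alpha : V \to \mathcal{W}^{r}_{15,d}$ I would argue as in the two previous propositions: under small deformations the embedding $h = |\mathcal{O}_C(1)|$ of degree $28-d$ remains very ample and the Petri map remains injective, so $h$ is a general $g^{N}_{28-d}$; since a complete linear series and its residual share the same Brill--Noether number, the residual series $|\mathcal{O}_C(C)| = \omega_C \otimes h^{\vee}$ is correspondingly a general element of $W^{r}_{d}(C)$, and $\alpha$ is dominant. The conclusion $\overline{\mathcal{M}}_{15,n}$ uniruled for $n \le 2$ then follows from Theorem~\ref{mgnuniruled} as soon as $r + \rho(15,r,d) \ge 2$. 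The heart of the argument is therefore the numerical check: extract the precise invariants $(r,d)$ from the Bruno--Verra construction and verify $r + \rho(15,r,d) \ge 2$ (smaller linear systems would still yield $n \le r$ via Remark~\ref{casorho=0}). The smoothness step and the dominance of $\alpha$ are, once more, formal consequences of the template used in Propositions~\ref{m125uniruled} and \ref{m133uniruled}.
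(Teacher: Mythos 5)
Your proposal follows the same route as the paper: take the Bruno--Verra surface, build the parameter scheme $V$ as in Proposition~\ref{m125uniruled}, identify $|\mathcal{O}_C(C)|$ with $\omega_C(-1)$ via adjunction on a canonical surface, get dominance of $\alpha$, and conclude by Theorem~\ref{mgnuniruled}. Two small structural remarks. First, the Bertini-type smoothing step you hedge on is unnecessary: \cite{BV} already embeds the general genus-$15$ curve in a \emph{nonsingular} complete intersection of four quadrics in $\mathbb{P}^6$, so the surface comes smooth. Second, for the dominance of $\alpha$ the paper does not rerun the very-ampleness/Petri argument from the genus $12$ and $13$ cases; it simply quotes the dominance of the map $u:\mathcal{D}\rightarrow \mathcal{W}^1_{15,9}$, $C \mapsto [(C,\omega_C(-1))]$, which is already proved in \cite{BV} (see (2.9) there). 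Your version would likely go through, but citing the existing dominance statement is cleaner and avoids having to re-establish that the relevant Hilbert scheme component behaves as in the lower-genus cases.

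The genuine gap is that you never supply the numerical data that is the actual content of the proposition. You explicitly defer ``the heart of the argument'' --- extracting $(r,d)$ and checking $r+\rho(15,r,d)\geq 2$ --- but without those numbers the bound $n\leq 2$ is not established (nor is it ruled out that the method gives more). The required facts from \cite{BV} are: $C$ has degree $19$ in $\mathbb{P}^6$, so $\deg \omega_C(-1) = 28-19 = 9$, i.e.\ $C^2 = d = 9$; and $h^{0}(\mathcal{O}_C(C))=2$, so $r=1$ and $|\mathcal{O}_C(C)|$ is a $g^1_9$. Then $\rho(15,1,9)=15-2\cdot(15-9+1)=1$, whence $r+\rho=2$ and Theorem~\ref{mgnuniruled} gives uniruledness exactly for $n\leq 2$. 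A complete proof must include this computation; as written, your argument is a correct template with the decisive verification left blank.
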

\begin{proof}
In \cite{BV} the authors show that the general curve $C$ of genus 15 can be embedded as a curve of degree 19 in a nonsingular projective regular surface $S$ which is the complete intersection of four quadrics in $\mathbb{P}^6$. $C$ defines on $S$ a non-isotrivial linear system of dimension $h^{0}(\mathcal{O}_C(C))=2$, in particular the linear series $|\mathcal{O}_C(C)|$ is a $g^1_9$.\\
In the article a family of curves $\mathcal{D}$ whose general element has the above property is considered and it is shown that the morphism
$$u:\mathcal{D} \rightarrow \mathcal{W}^1_{15,9}$$
$$C \mapsto [(C,\omega_C(-1))]$$
is dominant (cf. \cite{BV}, (2.9) p. 5).\\
The construction of a scheme $V$ parameterizing a deformation $\left(\mathscr{C},\mathscr{S}\right)$ as in Theorem \ref{mgnuniruled} is analogous to the one done in Proposition \ref{m125uniruled}.\\
Since $S$ is a canonical surface, for a general $C \in \mathcal{D}$ one has $\omega_C(-1) \cong \mathcal{O}_C(C)$ by adjunction, hence the morphism defined in Theorem \ref{mgnuniruled}
$$\alpha: V \rightarrow \mathcal{W}^{1}_{15,9}$$
is dominant.\\
The Brill-Noether number of the linear series $|\mathcal{O}_C(C)|$ is $\rho=\rho(15,1,9)=1$.\\
By Theorem \ref{mgnuniruled} the statement follows.
\end{proof}
Using Theorem \ref{mgnuniruled} one is able to recover the great part of the known results concerning the uniruledness of moduli spaces $\overline{\mathcal{M}}_{g,n}$ using suitable linear systems of curves. Adam Logan has listed a few in the end of his paper \cite{LO}. Just to give an example one can consider linear systems on K3 surfaces.
\begin{prop}
\label{mgnk3}
$\overline{\mathcal{M}}_{g,n}$ is uniruled for $3 \leq g \leq 9$ or $g=11$ and $n=1,...,g-1$.
\end{prop}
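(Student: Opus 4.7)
The plan is to apply Theorem \ref{mgnuniruled} to the complete linear system $|C|$ on a smooth primitively polarized K3 surface $(S, L)$ of genus $g$, with $C \in |L|$ a general smooth member. All ingredients are already available in the background section, and the only nontrivial external input is Mukai's theorem quoted as Theorem \ref{cg}.

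First I would invoke Theorem \ref{cg}: for $3 \leq g \leq 9$ or $g = 11$ the morphism $c_g : \mathcal{KC}_g \to \mathscr{M}_g$ is dominant, so a general curve $C$ of genus $g$ arises as a smooth member of $|L|$ on some smooth K3 surface $S$, with $L^2 = 2g-2$ and $\dim|L| = g$. A pointed smooth connected scheme $V$ parameterizing deformations $(\mathscr{S}, \mathscr{C}) \to V$ of the pair $(S, C)$ as in Definition \ref{generalcurveonlinearsystem} can be obtained from a smooth atlas of an irreducible component of $\mathcal{KC}_g$, shrinking $V$ so that every fibre is a smooth K3 surface containing a smooth member of the primitive linear system; the Kodaira-Spencer surjectivity at $v$ then follows from the dominance of $V \to \mathscr{M}_g$.

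Next I would verify the three hypotheses of Theorem \ref{mgnuniruled}. Regularity is automatic since $q(S) = 0$ for any K3 surface. Non-isotriviality of a general pencil in $|C|$ follows from Proposition \ref{fibrazioneisotrivialesuperficierigata}, because a K3 surface has Kodaira dimension $0$ and is therefore not birationally ruled. For the dominance of $\alpha$, adjunction combined with $K_S \cong \mathcal{O}_S$ gives $\mathcal{O}_C(C) \cong \omega_C$, hence $|\mathcal{O}_C(C)|$ is the canonical series $g^{g-1}_{2g-2}$ and
$$\alpha : V \to \mathcal{W}^{g-1}_{g, 2g-2}, \qquad w \mapsto [(\mathscr{C}(w), \omega_{\mathscr{C}(w)})].$$
Since the canonical bundle is intrinsic to the curve, dominance of $\alpha$ reduces to dominance of the forgetful map $V \to \mathscr{M}_g$, which is again Theorem \ref{cg}.

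With $r = g - 1$ and $d = C^2 = 2g - 2$, the Brill-Noether number is $\rho(g, g-1, 2g-2) = g - g \cdot 1 = 0$, so Theorem \ref{mgnuniruled} yields uniruledness of $\overline{\mathcal{M}}_{g,n}$ for all $n \leq r + \rho = g - 1$. I do not foresee any genuine obstacle: the K3 case is essentially the cleanest instance of the general machinery, with non-isotriviality, regularity, the adjunction identification $\mathcal{O}_C(C) \cong \omega_C$, and the vanishing of $\rho$ all reading off immediately from the K3 classification and Mukai's theorem.
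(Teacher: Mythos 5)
Your proposal is correct and follows essentially the same route as the paper: Mukai's theorem (Theorem \ref{cg}) combined with the general criterion of Theorem \ref{mgnuniruled} applied to the hyperplane system of a primitively polarized K3 surface, with $r=g-1$, $d=2g-2$ and $\rho=0$. The only (harmless) difference is that you verify dominance of $\alpha$ explicitly --- which is automatic here since $\mathcal{O}_C(C)\cong\omega_C$ is intrinsic --- whereas the paper just invokes Remark \ref{casorho=0}, which already gives $n\leq r=g-1$ with no dominance hypothesis because $\rho=0$ contributes nothing to the bound.
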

\begin{proof}
Theorem \ref{cg} tells that the hyperplane linear system $|H|$ of a general primi-tively polarized K3 surface $(S,H)$ of genus $g \geq 3$ contains the general curve $C$ of genus $g$ for $g \leq 9$ or $g=11$. One has $h^{0}(\mathcal{O}_C(C))=g$, thus Remark \ref{casorho=0} gives the statement.
\end{proof}
In particular one obtains that $\overline{\mathcal{M}}_{11,n}$ is uniruled for all $n \leq 10$, a result which is sharp (cf. the table).\\
By Theorem \ref{cg} the hyperplane linear system $|H|$ of a general primitively polarized K3 surface $(S,H)$ of genus $10$ does not contain the general curve of genus $10$, thus Theorem \ref{mgnuniruled} cannot be applied to $|H|$.\\
Nevertheless, one can use a result contained in \cite{FKPS} to work out a linear system on which Theorem \ref{mgnuniruled} can be applied. One has the following
\begin{prop}
\label{m10n}
$\overline{\mathcal{M}}_{10,n}$ is uniruled for $n=1,...,7$.
\end{prop}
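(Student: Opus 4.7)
The plan is to apply Theorem \ref{mgnuniruled} exactly as in Propositions \ref{m125uniruled}--\ref{m152uniruled}, but with the K3 hyperplane system replaced by a construction produced by \cite{FKPS}. Since $g=10 \geq 7$, Remark \ref{ggeq7regularredundant} makes the regularity assumption on $S$ automatic, so the only things to verify are the existence of a smooth projective surface $S$ carrying a positive-dimensional non-isotrivial linear system whose general member is a general curve of genus $10$, the dominance of the modular map $\alpha:V \to \mathcal{W}^r_{10,d}$, and the numerical inequality $r+\rho \geq 7$.

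First I would invoke the result of \cite{FKPS} to produce the pair $(S,C)$. By Theorem \ref{cg} the hyperplane system of a K3 surface is not an option in genus $10$, so this construction necessarily leaves the K3-section setting. Non-isotriviality of $|C|$ is then checked via Proposition \ref{fibrazioneisotrivialesuperficierigata}, namely by ruling out that $S$ is birational to $C\times\mathbb{P}^1$; with the explicit geometry supplied by \cite{FKPS}, this should be a direct verification.

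Next I would build the parameter scheme $V$ along the lines of Proposition \ref{m125uniruled}, starting from the relevant component of the Hilbert scheme of the projective models of $C$ and forming an appropriate projective bundle parameterising the defining data of $S$, restricted to the open locus where the resulting surface is smooth. Using adjunction together with the regularity of $S$ one identifies the restricted series $|\mathcal{O}_C(C)|$ with a complete $g^r_d$, and by the same Hilbert-scheme/Petri-map argument as in the previous propositions this series is a general element of $W^r_d(C)$, so that $\alpha$ is dominant.

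The final step is the numerical check. Accepting that \cite{FKPS} yields a linear series $|\mathcal{O}_C(C)|$ for which $r+\rho(10,r,d) \geq 7$ --- the most natural candidate being $(r,d)=(1,9)$, with $\rho(10,1,9)=10-2(10-9+1)=6$ so that $r+\rho=7$ --- Theorem \ref{mgnuniruled} immediately gives uniruledness of $\overline{\mathcal{M}}_{10,n}$ for all $n \leq 7$. The main obstacle is the dominance of $\alpha$: one needs a genericity statement for the pair $(C,\mathcal{O}_C(C))$ as $(S,C)$ varies in $V$, not just the existence of the surface $S$, and this is the Brill--Noether content that must be extracted from \cite{FKPS}.
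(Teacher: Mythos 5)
There is a genuine gap, and you point to it yourself in your last sentence: the dominance of $\alpha$ is never established, and nothing in \cite{FKPS} supplies a surface on which $|\mathcal{O}_C(C)|$ is a general $g^1_9$ (or any series with $r+\rho=7$). Your candidate numerics $(r,d)=(1,9)$ are not attached to any construction you actually describe, so the ``Brill--Noether content to be extracted from \cite{FKPS}'' that your plan requires simply is not there. The paper's proof sidesteps the dominance question entirely. What \cite{FKPS}, Section 5, provides is that on a general primitively polarized K3 surface $(S,H)$ of genus $11$ the normalization of the general nodal curve in $|H|$ is a \emph{general} curve of genus $10$. Imposing an ordinary node at a general point $p$ of $S$ costs three conditions, so after blowing up $S$ at $p$ one obtains on the (still regular) blow-up a linear system $|C|$ of dimension $\dim|H|-3=8$ whose general member is a general curve of genus $10$, and which is non-isotrivial by Proposition \ref{fibrazioneisotrivialesuperficierigata}.

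With $\dim|C|=r+1=8$, i.e.\ $r=7$, Remark \ref{casorho=0} applies verbatim: through any $7$ points of $C$ there is a pencil inside $|C|$, so no genericity of the pair $(C,\mathcal{O}_C(C))$ in a universal Brill--Noether locus is needed and $\overline{\mathcal{M}}_{10,n}$ is uniruled for $n\le 7$. The missing idea in your write-up is therefore not a stronger dominance statement but the elementary observation that the linear system produced from the genus-$11$ K3 is large enough for the weaker Remark \ref{casorho=0} --- the same shortcut used in Proposition \ref{mgnk3} --- to give the bound $n\le 7$ directly.
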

\begin{proof}
Consider a general primitively polarized K3 surface of genus 11 $(S,H)$. By \cite{FKPS}, section 5, the normalization of the general nodal curve in $|H|$ (having arithmetic genus $p_a(H)=11$) is a general curve of genus $10$. \\
Consider the linear subsystem of curves of $|H|$ having an ordinary node at a general point $p$ of $S$. Blowing up the surface $S$ at $p$ one obtains a linear system $|C|$ whose general element is a general curve of genus $10$, having dimension $\dim|H|-3=8$. Remark \ref{casorho=0} gives the statement.
\end{proof}
\end{section}
\begin{section}{Uniruledness of some pointed hyperelliptic loci $\mathcal{H}_{g,n}$}
The proof of Theorem \ref{mgnuniruled} can in principle be adapted to study the uniruledness of various loci in $\overline{\mathcal{M}}_{g,n}$. The starting point will be to exhibit a positive-dimensional non-isotrivial linear system containing a curve corresponding to the general point of the image of the locus under the morphism to $\overline{\mathcal{M}}_g$ forgetting the marked points.\\
As an example of application of this principle, in the sequel we prove the uniruledness of the pointed hyperelliptic loci $\mathcal{H}_{g,n}$ for all $g \geq 2$ and $n \leq 4g+4$.\\
For each integer $g \geq 2$ consider the linear system $|\mathcal{O}(g+1,2)| \subset \mathbb{P}^1 \times \mathbb{P}^1$. The general element of this system is a general hyperelliptic curve $C$ of genus $g$, whose $g^1_2$ is given by the projection onto the first factor.
Note that, since there is not a nonsingular rational curve $E$ such that $E \cdot C=1$, the dimension of $|C|$ is the maximal dimension permitted by \cite{CC}, Theorem 1.1. This is consistent with the fact that linear systems of maximal dimension with respect to a fixed $g$ must be systems of hyperelliptic curves and dominate the hyperelliptic locus (see \cite{CC}, Proposition 2.3).
\begin{prop}
\label{hgn}
The $n$-pointed hyperelliptic locus $\mathcal{H}_{g,n}$ is uniruled for all $g \geq 2$ and $n \leq 4g+4$.
\end{prop}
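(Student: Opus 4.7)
The plan is to adapt the proof of Theorem~\ref{mgnuniruled} to the hyperelliptic setting with $S = \mathbb{P}^1 \times \mathbb{P}^1$ and $|C| = |\mathcal{O}(g+1,2)|$. I would begin by recording the relevant numerics: writing $A = \mathcal{O}_C(1,0)|_C$ for the $g^1_2$ and $B = \mathcal{O}_C(0,1)|_C \in \mathrm{Pic}^{g+1}(C)$, adjunction gives $\mathcal{O}_C(C) \cong A^{g+1} \otimes B^2$, $\dim|C| = 3g+5$, $C^2 = 4g+4$, and regularity of $S$ makes the complete series $|\mathcal{O}_C(C)|$ non-special of dimension $3g+4$. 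Setting $r = 3g+4$ and $d = 4g+4$ one computes the Brill-Noether number $\rho(g,r,d) = g$, so the target bound matches $n \leq r + \rho = 4g+4$ as in Theorem~\ref{mgnuniruled}.

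Next I would run the same argument as in Theorem~\ref{mgnuniruled}: for a general $[(C,q_1,\ldots,q_n)] \in \mathcal{H}_{g,n}$, since $\mathcal{O}_C(C)$ has degree $\geq 2g-1$ it is non-special for every smooth curve $C$, so $W^r_d(C) = \mathrm{Pic}^d(C)$ and $C^r_{(d)} = C_{(d)}$. The existence of a degree-$d$ divisor containing $q_1,\ldots,q_n$ required by that proof is trivial here -- just take $D = q_1 + \ldots + q_n + E$ for any effective $E$ on $C$ of degree $4g+4-n$.

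The step that needs new input is the replacement for the dominance of $\alpha:V \to \mathcal{W}^r_{g,d}$, which fails for $g \geq 3$ since a general hyperelliptic curve is not a general curve of genus $g$. What is actually needed is just one embedding $\iota: C \hookrightarrow S$ of bidegree $(g+1,2)$ with $\iota^* \mathcal{O}_S(C) \cong \mathcal{O}_C(D)$; this reduces to choosing $B \in \mathrm{Pic}^{g+1}(C)$ with $B^2 \cong A^{-g-1} \otimes \mathcal{O}_C(D) \in \mathrm{Pic}^{2g+2}(C)$, and such $B$ exists because the squaring morphism $\mathrm{Pic}^{g+1}(C) \to \mathrm{Pic}^{2g+2}(C)$ is surjective (\'etale of degree $2^{2g}$). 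The main obstacle is to verify that for generic data the $B$ we obtain lies in the dense open locus where $|B|$ is a base-point-free $g^1_{g+1}$ and $(\phi_A,\phi_B)$ is a closed embedding onto a smooth curve of bidegree $(g+1,2)$; for $n \leq 3g+4$ this is easy because $E$ varies over a $\geq g$-dimensional family and spreads $\mathcal{O}_C(D)$ over all of $\mathrm{Pic}^d(C)$, while for $3g+5 \leq n \leq 4g+4$ one has to combine the genericity of the $q_i$'s with the smaller freedom in $E$, which is the technical heart of the argument.

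Once $\iota$ is fixed, the surjective restriction $|\mathcal{O}_S(C)| \to |\mathcal{O}_C(C)|$ provides a curve $C' \in |\mathcal{O}_S(C)|$ distinct from $\iota(C)$ with $\iota(C) \cap C' \supseteq \iota(\{q_1,\ldots,q_n\})$. Blowing up the base locus of the pencil $\langle \iota(C), C'\rangle$ yields a fibration $f:X \to \mathbb{P}^1$ in hyperelliptic genus-$g$ curves with sections $\sigma_i$ over the $q_i$. Since $S$ is rational and not birational to $C \times \mathbb{P}^1$, Proposition~\ref{fibrazioneisotrivialesuperficierigata} (or, more directly, a dimension count using that the family $|\mathcal{O}_S(C)|$ dominates $\mathcal{H}_g$) ensures that the pencil is non-isotrivial for generic $C'$, and the induced $\mathbb{P}^1 \to \overline{\mathcal{H}}_{g,n}$ is a non-constant rational curve through the general point $[(C,q_1,\ldots,q_n)]$, proving uniruledness of $\mathcal{H}_{g,n}$.
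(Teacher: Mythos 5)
Your proposal follows the paper's strategy in outline --- same surface $\mathbb{P}^1\times\mathbb{P}^1$, same system $|\mathcal{O}(g+1,2)|$, same numerics $r=3g+4$, $d=4g+4$, $\rho=g$, and the same reduction to producing one embedding realizing a prescribed class in $\mathrm{Pic}^{4g+4}(C)$ --- but you certify that step by a different mechanism. The paper shows that the family of bidegree-$(g+1,2)$ models of a fixed $C$ dominates $\mathrm{Pic}^{g+3}(C)$ via the count $h^0(T_{\mathbb{P}^1\times\mathbb{P}^1}|_C)=g+6$ against $h^0(T_{\mathbb{P}^1\times\mathbb{P}^1})=6$ and $\dim W^3_{g+3}(C)=g$, concluding that $|\mathcal{O}_C(C)|$ is a general $g^{3g+4}_{4g+4}$ and then invoking Remark \ref{casod=r+rho}; you instead invert the problem and solve for $B$ with $B^{2}\cong A^{-g-1}\otimes\mathcal{O}_C(D)$ using surjectivity of squaring on the Jacobian. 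Your route is arguably more elementary and makes the needed embedding explicit, at the cost of having to check that the resulting $B$ is general enough for $(\phi_A,\phi_B)$ to be an embedding, which is exactly where you stop.

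That last step, which you defer as ``the technical heart'' for $3g+5\leq n\leq 4g+4$, is in fact immediate and should be written out: in that range $n\geq g$, so the Abel map $C_{(n)}\to \mathrm{Pic}^n(C)$ already sends the general $q_1+\cdots+q_n$ to a general class, hence $\mathcal{O}_C(D)$ is general in $\mathrm{Pic}^{4g+4}(C)$ for any choice of $E$; and since the squaring map $\mathrm{Pic}^{g+1}(C)\to\mathrm{Pic}^{2g+2}(C)$ is finite, the image of any proper closed subset is proper closed, so \emph{every} square root $B$ of a general class avoids the bad locus where $|B|$ fails to be a base-point-free $g^1_{g+1}$ or $A\otimes B$ fails to be very ample. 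The same finiteness remark is what legitimizes your easy case $n\leq 3g+4$ as well, so the two cases can be treated uniformly. One further small correction: Proposition \ref{fibrazioneisotrivialesuperficierigata} is stated for curves with general moduli and cannot be quoted verbatim for a hyperelliptic $C$; your parenthetical fallback --- non-isotriviality because $|\mathcal{O}(g+1,2)|$ dominates the positive-dimensional $\mathcal{H}_g$ --- is the correct justification and is essentially what the paper relies on via the discussion of \cite{CC} preceding the proposition.
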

\begin{proof}
Let $C$ be a general curve in $|\mathcal{O}(g+1,2)|$. Since $\mathbb{P}^1 \times \mathbb{P}^1$ is regular, it is sufficient to show that there exists a divisor in $|\mathcal{O}_C(C)|$ containing $n$ general points, $n \leq 4g+4$. Since the linear system is non-isotrivial, the statement will then follow from a straightforward adaptation of the proof of Theorem \ref{mgnuniruled}.\\
The cohomology sequence associated to the exact sequence
$$0 \rightarrow T_{\mathbb{P}^1 \times \mathbb{P}^1}(-C) \rightarrow T_{\mathbb{P}^1 \times \mathbb{P}^1} \rightarrow {T_{\mathbb{P}^1 \times \mathbb{P}^1}}_{|C} \rightarrow 0$$
gives $h^{0}({T_{\mathbb{P}^1 \times \mathbb{P}^1}}_{|C})=6+g$.\\
Since $h^{0}(T_{\mathbb{P}^1 \times \mathbb{P}^1})=6$ and $\dim W^3_{g+3}(C)=g$,
the linear series $|\mathcal{O}_C(1,1)|$, which is cut by hyperplanes in the projective embedding of $\mathbb{P}^1 \times \mathbb{P}^1$ as a quadric in $\mathbb{P}^3$, is general as a point in $W^3_{g+3}(C)$. Since $|\mathcal{O}_{C}(1,1)|=g^1_2+g^1_{g+1}$ is the sum of the two linear series defined on $C$, respectively, by the two projections of $\mathbb{P}^1 \times \mathbb{P}^1$,
we have that the $g^1_{g+1}$ is general too (the $g^1_2$ is unique).\\
Since $|\mathcal{O}_C(C)|=(g+1)g^1_2+2g^1_{g+1}$,
one has that $|\mathcal{O}_C(C)|$ is a general $g^{3g+4}_{4g+4}$, whose Brill-Noether number is $\rho=\rho(g,3g+4,4g+4)=g$. By Remark \ref{casod=r+rho} there is a divisor in $|\mathcal{O}_C(C)|$ containing $n$ general points, $n \leq 4g+4$.
\end{proof}
\end{section}
\begin{section}{Linear systems containing the general curve of genus $g$ on complete intersection surfaces}
\label{Linear systems containing the general curve of genus $g$ on complete intersection surfaces}
A natural question arising at this point is the following. Theorem \ref{mgnuniruled} is quite general and can in principle be applied to a wide range of linear systems, while we used it only on a few. Are there linear systems which can be used for an improvement of Theorem \ref{maintheorem}? If yes, which are the surfaces carrying them?\\
The results contained in Section \ref{background} tell that, up to a technical assumption, a general curve of genus $g \geq 12$ cannot move in a positive-dimensional non-isotrivial linear system on a nonsingular projective surface $S$ if $\text{Kod}(S) \leq 0$.\\
Hence such a linear system can be realized only on a surface of Kodaira dimension $1$ or on a surface of general type.\\
Now, it is a fact that positive-dimensional linear systems containing the general curve of genus $g$ for $12 \leq g \leq 15$ have been discovered on complete intersection surfaces. In \cite{CR} this is obvious since the curves are embedded in $\mathbb{P}^3$. In \cite{V} and \cite{BV} it does not come as a surprise once one looks at the proofs: typically a reducible curve, say $D_1 \cup D_2$, is constructed on a reducible complete intersection surface $Y$ in $\mathbb{P}^r$ and it is then proved that it can be smoothed to a general curve of genus $g$.\\
The whole construction is harder to handle if $Y$ is not a complete intersection.\\
In this section an extensive discussion on linear systems on complete intersection surfaces is carried out, giving a partial answer to the questions above.
It turns out that, if $S$ is a nonsingular complete intersection surface, it is not possible to have a general curve of genus $g \geq 16$ moving on it, while, if $12 \leq g \leq 15$, the surface $S$ must be a canonical one. There are only five families of canonical complete intersection surfaces. Two of these, namely the quintic surfaces in $\mathbb{P}^3$ and the complete intersections of four quadrics in $\mathbb{P}^6$, were exactly those used in Section \ref{unirulednessofsomemodulispaces} to prove our uniruledness results.\\ \\
It is a classical well-known fact that a general curve of genus $g \geq 12$ cannot be embedded in $\mathbb{P}^2$. The other families of complete intersection surfaces of negative Kodaira dimension are the quadric and the cubic surfaces in $\mathbb{P}^3$ and the complete intersections of two quadrics in $\mathbb{P}^4$ (all these are rational surfaces). The following proposition takes care of these cases:
\begin{prop}
\label{2322}
Let $C$ be a general curve of genus $g \geq 2$ moving in a positive-dimensional linear system on a nonsingular quadric or cubic surface in $\mathbb{P}^3$, or on a nonsingular complete intersection of two quadrics in $\mathbb{P}^4$. Then $g \leq \alpha$, where $\alpha=4,9,8$, respectively.
\end{prop}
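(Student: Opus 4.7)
The plan is to apply Sernesi's inequality (Theorem \ref{disuguaglianzafibrazionefree}) to each of the three surfaces, combined with the gonality bounds coming from the $(-1)$-curves of $S$. Throughout, each of these surfaces is regular with $\chi(\mathcal{O}_S) = 1$, so by adjunction and Riemann--Roch on $S$ one obtains $h^0(\mathcal{O}_S(C)) = g + d$, where $d = C \cdot H$ is the degree of $C$ in the given projective embedding. Moreover, since $C$ has general moduli and $g \geq 3$, its gonality is exactly $\lceil (g+2)/2 \rceil$.

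First I would handle the quadric $S = \mathbb{P}^1 \times \mathbb{P}^1$. Writing $C$ of bidegree $(a,b)$, one has $g = (a-1)(b-1)$, $C^2 = 2ab$ and $h^0(\mathcal{O}_S(C)) = (a+1)(b+1)$; with $K_S^2 = 8$ the Sernesi inequality reduces to $(a-2)(b-2) \leq 1$. The two rulings yield a $g^1_a$ and a $g^1_b$ on $C$, so its gonality is at most $\min(a,b)$. For a general curve of genus $g \geq 3$ this forces $a,b \geq 3$, leaving only $(a,b) = (3,3)$ and hence $g = 4$.

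Next I would treat the cubic $S \subset \mathbb{P}^3$ and the complete intersection of two quadrics $S \subset \mathbb{P}^4$ in parallel. Identifying $S$ with the blow-up of $\mathbb{P}^2$ at $6$ (respectively $5$) general points and writing $C = aL - \sum_i b_i E_i$, one has $g = \binom{a-1}{2} - \sum_i \binom{b_i}{2}$ and $d = 3a - \sum_i b_i$; Sernesi (with $K_S^2 = 3$, respectively $4$) reduces to $d \geq 2g - 6$, respectively $d \geq 2g - 4$. Projection from any line $\ell \subset S$ induces on $C$ a pencil of degree $d - C \cdot \ell$, constraining the gonality. On the cubic, the line $\ell_i = 2L - \sum_{k \ne i} E_k$ gives $C \cdot \ell_i = d - a + b_i$, whence $\mathrm{gon}(C) \leq a - b_i$ and therefore $a - \max_i b_i \geq \lceil (g+2)/2 \rceil$; on the del Pezzo of degree $4$, the unique line $\ell_0 = 2L - \sum_k E_k$ yields $\mathrm{gon}(C) \leq a$, hence $a \geq \lceil (g+2)/2 \rceil$.

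The hard part is then the finite but delicate enumeration of classes $(a, b_1, \ldots, b_n)$ compatible with the prescribed genus $g$, the Sernesi bound on $d$ and the gonality inequalities above (supplemented by the analogous bounds from the remaining $(-1)$-curves of $S$). For the cubic at $g = 10$, essentially the only surviving class is $C = 6L$, which is the pull-back of a smooth plane sextic missing all six blown-up points; but smooth plane sextics form a locus of dimension $19$ inside $\overline{\mathcal{M}}_{10}$ (which has dimension $27$), hence do not contain the general curve. Ruling out this boundary class via the moduli dimension count completes the bound $g \leq 9$ for the cubic. The parallel enumeration on the del Pezzo of degree $4$ shows that no class with $g = 9$ simultaneously satisfies $a \geq 6$, the Sernesi bound $d \geq 14$ and the genus formula, yielding $g \leq 8$.
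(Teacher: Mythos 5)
Your treatment of the quadric is correct and takes a genuinely different route from the paper's: you feed $\chi(\mathcal{O}_S)=1$, $K_S^2=8$, $C^2=2ab$ and $h^0(\mathcal{O}_S(C))=(a+1)(b+1)$ into Theorem~\ref{disuguaglianzafibrazionefree} to get $(a-2)(b-2)\le 1$ and then invoke the gonality $\lceil (g+2)/2\rceil$ of the general curve, whereas the paper exploits the rigidity of $\mathbb{P}^1\times\mathbb{P}^1$ and shows that $h^2(T_S(-C))=h^0(\mathcal{O}(a-4,b-2)\oplus\mathcal{O}(a-2,b-4))$ is nonzero for $g\ge 5$, so the Kodaira--Spencer map cannot be surjective. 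Both work. One slip in your preamble: the formula $h^0(\mathcal{O}_S(C))=g+d$ holds only when $K_S=-H$ (the cubic and the $(2,2)$), not on the quadric, where it is $(a+1)(b+1)=ab+a+b+1\neq g+d=ab+1$; you do not actually use it there, but it should not be stated as a blanket fact.

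For the cubic and the $(2,2)$ there is a genuine gap. Your derivation of $d\ge 2g-6$ (resp.\ $d\ge 2g-4$) from (\ref{5g-111chios}) agrees exactly with the paper's $C^2\ge 4g-8$ (resp.\ $C^2\ge 4g-6$) rewritten via $C^2=2g-2+d$. But the paper finishes in one line by pairing this with an \emph{upper} bound, $d\le g+3$ (resp.\ $d\le g+4$), i.e.\ $C^2\le 3g+1$ (resp.\ $3g+2$), obtained from Riemann--Roch on $\mathcal{O}_C(1)$, and these two bounds give $g\le 9$ (resp.\ $g\le 8$) immediately. Your proposal contains no upper bound on $d$ whatsoever: the Sernesi inequality and every gonality condition you list (from lines, and implicitly from the conic pencils $C\cdot Q\ge\lceil(g+2)/2\rceil$) are lower bounds, so the set of classes to be enumerated is not finite as stated. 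Concretely, on the cubic the very ample class $9(L-E_1)+(3L-\textstyle\sum_i E_i)=12L-10E_1-E_2-\cdots-E_6$ has $p_a=10$ and degree $21\ge 14$, and for fixed genus there are irreducible classes of arbitrarily large degree passing the Sernesi test; only the gonality conditions can exclude them (here $C\cdot(L-E_1)=2$ forces hyperellipticity), and you would need to prove that the classes surviving all $27$ conic-pencil conditions form a finite set --- which is neither obvious nor argued --- and then actually check each survivor (the claims ``essentially the only surviving class is $6L$'' and ``the parallel enumeration shows'' are assertions, not proofs; only the dimension count for $6L$ is carried out). A further factual slip: a del Pezzo surface of degree $4$ has $16$ lines, not the ``unique'' one $2L-\sum_k E_k$. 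The efficient repair is to supply the missing upper bound on $\deg C$ as the paper does, after which no enumeration of classes is needed at all.
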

\begin{proof}
Let $S=\mathbb{P}^1 \times \mathbb{P}^1$ and consider the exact sequence $0 \rightarrow T_C \rightarrow {T_S}_{|C} \rightarrow N_{C/S} \rightarrow 0$. The surface $S$ is rigid since $h^{1}(T_S)=0$, hence it is sufficient to show that the cohomology map $H^0(N_{C/S}) \xrightarrow{\beta} H^1(T_C)$ is not surjective for $g \geq 5$. One has $C \in |\mathcal{O}(a,b)|$, $a,b \geq 1$, $C^2=2ab$ and $g(C)=(a-1)(b-1)$, thus $h^1(N_{C/S})=h^1(\mathcal{O}_C(C))=0$. It follows that $\beta$ is not surjective if and only if $h^1({T_S}_{|C}) \neq 0$.\\
The cohomology sequence associated to the exact sequence $0 \rightarrow T_{S}(-C) \rightarrow T_{S} \rightarrow {T_{S}}_{|C} \rightarrow 0$ gives $h^{1}({T_S}_{|C})=h^{2}(T_{S}(-C))$.\\
Let $C \in |\mathcal{O}(a,b)|$. Using Serre duality one shows that $h^{2}(T_{S}(-C))=h^{0}(\mathcal{O}(a-4,b-2) \oplus \mathcal{O}(a-2,b-4))$, which equals $0$ if and only if $(a \leq 3 \lor b \leq 1) \wedge (a \leq 1 \lor b \leq 3)$. This condition is never satisfied if $a,b$ are such that $(a-1)(b-1)=g$ for $g \geq 5$ (while it is satisfied if $a=b=3$ i.e. for curves of genus $4$).\\
Let $S$ be a nonsingular cubic surface in $\mathbb{P}^3$ and let $d=\deg C$. Riemann-Roch formula gives $d \leq g+3$. Since $K_S =\mathcal{O}_S(-1)$, the genus formula gives $2g-1 \leq C^2 \leq 3g+1$. In particular $\mathcal{O}_C(C)$ is nonspecial and Riemann-Roch gives $h^{0}(\mathcal{O}_C(C))=C^2 +1-g$. Since $S$ is regular one has $h^{0}(\mathcal{O}_S(C))=C^2+2-g$. Substituting in inequality (\ref{5g-111chios}) one obtains
$$5(g-1)-11\chi(\mathcal{O}_S)+2K^2_S-C^2+2-g=4g-8-C^2 \leq 0.$$
Since $C^2 \leq 3g+1$, one has $4g-8-C^2 \geq g-9$, hence $g \leq 9$ must hold.\\
Let $S$ be a nonsingular complete intersection of two quadrics in $\mathbb{P}^4$. This time Riemann-Roch formula gives $d \leq g+4$ and the genus formula gives $2g-1 \leq C^2 \leq 3g+2$. Arguing as in the previous case one obtains
$$5(g-1)-11\chi(\mathcal{O}_S)+2K^2_S-C^2+2-g=4g-6-C^2 \leq 0.$$
Since $C^2 \leq 3g+2$, one has $4g-6-C^2 \geq g-8$, hence $g \leq 8$ must hold.\\
\end{proof}
There are only three families of complete intersection surfaces having Kodaira dimension $0$, namely quartics in $\mathbb{P}^3$, complete intersections of type $(2,3)$ in $\mathbb{P}^4$ and of type $(2,2,2)$ in $\mathbb{P}^5$ (all these are K3 surfaces). These cases are covered by Theorem \ref{kod=0leq11}, which gives the bound $g \leq 11$.\\
All remaining complete intersection surfaces are of general type, having very ample canonical bundle.
\begin{prop}
\label{elencoSditipogenerale}
Let $C$ be a general curve of genus $g \geq 3$ moving in a positive-dimensional linear system on a nonsingular projective surface $S$ of general type which is a complete intersection of $r-2$ hypersurfaces in $\mathbb{P}^r$. Then $S$ is a canonical surface, that is $S$ must be one of the following types:
\begin{itemize}
\item $S=(5)$;
\item $S=(2,4)$;
\item $S=(3,3)$;
\item $S=(2,2,3)$;
\item $S=(2,2,2,2)$.
\end{itemize}
Moreover, one has $g \leq 15$ and if $g \geq 12$, then $C$ is non-degenerate in $\mathbb{P}^r$.
\end{prop}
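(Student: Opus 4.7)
The plan is to apply Sernesi's inequality (Theorem \ref{disuguaglianzafibrazionefree}) to each possible complete intersection surface of general type, using the adjunction formula on $C \subset S$ together with the Arbarello--Cornalba vanishing (Proposition \ref{propriet�curvemoduligenerali}) as the key technical inputs.

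I would first show that $S$ must be canonical. Since $S$ is a smooth complete intersection of general type in $\mathbb{P}^r$, the Lefschetz theorem gives $q(S)=0$ and the adjunction formula for complete intersections yields $K_S = m H|_S$ with $m := \sum_i d_i - r - 1 \geq 1$, where $H$ is the hyperplane class. Setting $L := H|_C$, the fact that the curve $C$ spans at least a line in $\mathbb{P}^r$ gives $h^{0}(L) \geq 2$, so Proposition \ref{propriet�curvemoduligenerali} yields $h^{1}(L^{\otimes 2}) = 0$. Since $L$ is effective on $C$, an easy induction using the sequences $0 \to \mathcal{O}_C(kL) \to \mathcal{O}_C((k+1)L) \to \mathcal{O}_D \to 0$ for $D \in |L|$ gives $h^{1}(L^{\otimes m}) = 0$ for every $m \geq 2$. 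By adjunction $\mathcal{O}_C(C) \cong \omega_C \otimes L^{\otimes(-m)}$, so Serre duality on $C$ forces $h^{0}(\mathcal{O}_C(C)) = h^{1}(L^{\otimes m}) = 0$ whenever $m \geq 2$. Combined with $h^{1}(\mathcal{O}_S) = 0$, the sequence $0 \to \mathcal{O}_S \to \mathcal{O}_S(C) \to \mathcal{O}_C(C) \to 0$ forces $h^{0}(\mathcal{O}_S(C)) = 1$, contradicting $\dim|C| \geq 1$. Hence $m = 1$ and $K_S \cong \mathcal{O}_S(1)$; enumerating the admissible $(d_1,\dots,d_{r-2})$ with $d_i \geq 2$ and $\sum d_i = r + 2$ gives exactly the five families.

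For the bound $g \leq 15$, I would feed the numerical invariants of a canonical complete intersection into Theorem \ref{disuguaglianzafibrazionefree}. Writing $\delta := \prod d_i$ and $d := \deg C$, one has $K^{2}_S = \delta$, $p_g = r+1$, $\chi(\mathcal{O}_S) = r+2$, $C^2 = 2g-2-d$ by adjunction on $S$, and $h^{0}(\mathcal{O}_S(C)) = g - d + h^{0}(H|_C)$ using Riemann--Roch on $C$ together with the regularity of $S$. Theorem \ref{disuguaglianzafibrazionefree} then rewrites as
\[
2g + d + h^{0}(H|_C) \leq 11r + 23 - 2\delta.
\]
If $C$ is non-degenerate in $\mathbb{P}^r$ then $h^{0}(H|_C) \geq r+1$, and since $C$ is a general curve carrying a $g^{r}_d$ the Brill--Noether inequality gives $d \geq r(g+r+1)/(r+1)$. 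Substitution yields
\[
g \leq \frac{(r+1)(9r+22-2\delta)}{3r+2},
\]
and direct evaluation at $(r,\delta) \in \{(3,5),(4,8),(4,9),(5,12),(6,16)\}$ produces $g \leq 14, 15, 14, 15, 15$ respectively.

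Finally I would establish non-degeneracy for $g \geq 12$ by contradiction. Degeneracy means $h^{0}(H - C) \geq 1$, so $C + E \sim H$ with $E$ effective, and hence $d \leq \delta$. The linear series cut by the minimal linear span $\mathbb{P}^{r-s}$ of $C$ (with $s \geq 1$) is a $g^{r-s}_d$ on the general curve $C$, and Brill--Noether together with $d \leq \delta$ yields, in the case $s=1$, the bound $g \leq r(\delta - r + 1)/(r-1)$, which evaluates to $4, 6, 8, 10, 13$ across the five types; the first four already forbid $g \geq 12$. For the surface of type $(2,2,2,2)$ only $g \in \{12,13\}$ survive numerically: Brill--Noether then forces $d \in \{15,16\}$, and either $d = 16$ (making $C = H$, a hyperplane section of genus $17$) or $d = 15$ (making $E$ a line on $S$ and the adjunction formula on $S$ give $g(H-L) = 14$) produces an actual genus incompatible with the assumption; the sub-cases $s \geq 2$ are handled by analogous, tighter inequalities on the span. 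The delicate point will be precisely this closing step for the $(2,2,2,2)$ surface: the Sernesi plus Brill--Noether bounds alone only narrow the possibilities, and one has to combine them with the explicit adjunction-genus computation of the residual effective divisor to eliminate the last borderline configurations.
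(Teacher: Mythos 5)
Your first two steps are sound and land very close to the paper's own argument. For canonicity, the paper also reduces to Proposition \ref{propriet�curvemoduligenerali} applied to $\mathcal{O}_C(2)$: it assumes $K_S=\mathcal{O}_S(h)$ with $h\geq 2$, uses a section of $\mathcal{O}_C(C)$ to produce a section of $\omega_C(-2)$ and concludes that $\mathcal{O}_C(2)$ is special; your contrapositive version (nonspecialty of $L^{\otimes 2}$ propagates to $L^{\otimes m}$ and kills $h^0(\mathcal{O}_C(C))$) is an equally valid route through the same key input. For $g\leq 15$ the paper likewise combines Theorem \ref{disuguaglianzafibrazionefree} with the Brill--Noether bound $d\geq \frac{r}{r+1}g+r$ and checks the five types one by one; your closed-form inequality $g\leq (r+1)(9r+22-2\delta)/(3r+2)$ is just a uniform repackaging of those five checks (and is marginally sharper, since you also feed in $h^0(H|_C)\geq r+1$), with the correct numerical values.

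The genuine divergence, and the one genuine gap, is in the non-degeneracy step. The paper avoids all case analysis on the span: if $C$ lies in a hyperplane, it lies on the complete intersection surface $S'$ of type $(k_1,\dots,k_{r-3},1)$, i.e.\ on a surface of type $(2)$, $(3)$, $(2,2)$ or $(2,2,2)$ of Kodaira dimension $\leq 0$ (or in $\mathbb{P}^2$), on which it still moves in a positive-dimensional linear system because $S'$ is regular and $C^2_{S'}\geq 2g-2$; Theorem \ref{kod=0leq11} and Proposition \ref{2322} then give $g\leq 11$ at once. Your route instead bounds $d\leq \delta-\deg E$ against the Brill--Noether lower bound for a $g^{r-s}_d$. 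This closes the case $s=1$ correctly, including your careful residual-class genus computation on the $(2,2,2,2)$ surface, but your claim that the sub-cases $s\geq 2$ are handled by \emph{tighter} inequalities is backwards: as $s$ grows the Brill--Noether lower bound on $d$ for a $g^{r-s}_d$ \emph{weakens}, so with only $d\leq\delta$ the bound deteriorates. Concretely, for $S=(2,2,3)$ with $s=2$ you get only $g\leq 12$, and for $S=(2,2,2,2)$ with $s=2$ only $g\leq 15$, neither of which rules out $g\geq 12$. To close these cases you would need an additional lower bound on $\deg E$ growing with $s$ (e.g.\ that a positive-dimensional linear system of degree-one divisors would make $S$ ruled), together with further residual-genus computations; none of this is automatic and none of it is supplied. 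As written, the non-degeneracy claim is therefore not established for spans of codimension $\geq 2$, and I would recommend either completing those sub-cases explicitly or switching to the paper's hyperplane-section reduction, which treats every degenerate configuration uniformly.
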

\begin{proof}
By assumption $S=(k_1,k_2,...,k_{r-2})$ is the nonsingular complete intersection of $r-2$ projective hypersurfaces of respective degree $k_1,...,k_{r-2}$ in $\mathbb{P}^r$.\\
Let $K_S=\mathcal{O}_S(h)$ and suppose by contradiction that $h \geq 2$. By adjunction one has $\mathcal{O}_C(K_C) \cong \mathcal{O}_C(h) \otimes \mathcal{O}_C(C)$. Since $C$ moves in a positive-dimensional linear system, the normal bundle $N_{C/S} \cong \mathcal{O}_C(C)$ has a nonzero section, hence the bundle $\mathcal{O}_C(h-2+C)$ has a nonzero section too, say $s$. Consider the exact sequence
$$0 \rightarrow \mathcal{O}_C(2) \xrightarrow{\otimes s} \mathcal{O}_C(K_C) \rightarrow T \rightarrow 0$$
where $T$ is the torsion sheaf supported on the zero locus of $s$. The associated cohomology sequence gives $h^1(\mathcal{O}_C(2)) \geq h^1(\mathcal{O}_C(K_C))$, thus $\mathcal{O}_C(2)$ is special. Since $h^{0}(\mathcal{O}_C(1)) \geq 2$, Proposition \ref{proprietàcurvemoduligenerali} gives a contradiction, hence $S$ must be a canonical surface.\\
Suppose by contradiction that $g(C) \geq 12$ and $C$ is a degenerate curve in $\mathbb{P}^r$. Then $C$ also lies on a nonsingular complete
intersection surface $S'$ of the kind $(k_1,...,k_{r-3},1)$ in $\mathbb{P}^r$ i.e. of the kind $(k_1,...,k_{r-3})$ in $\mathbb{P}^{r-1}$. Up to a permutation of the $k_i$, the surface $S'$ can then be supposed to be one among $(2), (3), (2,2), (2,2,2)$, which have Kodaira dimension $\leq 0$ (the case $S'=\mathbb{P}^2$ has already been ruled out).\\
Let $C^2_{S'}$ be the self-intersection of $C$ on $S'$. The genus formula gives $C^2_{S'} \geq 2g-2$. Since $S'$ is regular, $C$ moves on it in a positive-dimensional linear system. If $S'$ is a $(2,2,2)$ the contradiction is given by Theorem \ref{kod=0leq11}, otherwise it is given by Proposition \ref{2322}.\\
By contradiction let $g \geq 16$. By Theorem \ref{disuguaglianzafibrazionefree} inequality
\begin{equation}
\label{5g-111chios2}
5(g-1)-\left[11\chi(\mathcal{O}_S)-2K^2_S+2C^2\right]+h^{0}(\mathcal{O}_S(C)) \leq 0
\end{equation}
must be satisfied and $h^{0}(\mathcal{O}_S(C)) \geq 2$. Let $C$ be embedded in $S$ by a $g^{r}_{d}$ (by the previous part $C$ is nondegenerate in $\mathbb{P}^r$). Since $C$ has general moduli, the Brill-Noether number $\rho(g,r,d)=g-(r+1)(g-d+r)$ must be nonnegative, thus one has
\begin{equation}
\label{drr+1g+r}
d \geq \frac{r}{r+1}g+r
\end{equation}
from which
\begin{equation}
\label{c2casoS=(...)}
C^2=2g-2-C \cdot K_S=2g-2-d \leq \frac{r+2}{r+1}g-r-2.
\end{equation}
Let us examine each possible case.
\begin{itemize}
\item
Let $S$ be a nonsingular quintic in $\mathbb{P}^3$.
One has $5(g-1)-45-2C^2+h^{0}(\mathcal{O}_S(C)) \geq \frac{5}{2}g-40+h^{0}(\mathcal{O}_S(C)) > 0$ if $g \geq 16$, where the first term is the left term of (\ref{5g-111chios2}) and the first inequality follows by (\ref{c2casoS=(...)}). Thus $S$ cannot carry a general curve of genus $g \geq 16$ moving in a positive-dimensional linear system. The structure of the argument is the same for all the other cases.
\item Let $S=(2,4)$.
One has $5(g-1)-50-2C^2+h^{0}(\mathcal{O}_S(C)) \geq \frac{13}{5}g-43+h^{0}(\mathcal{O}_S(C)) > 0$ if $g \geq 16$, where the first term is the left term of (\ref{5g-111chios2}) and the first inequality follows by (\ref{c2casoS=(...)}).
\item Let $S=(3,3)$ in $\mathbb{P}^4$. One has $5(g-1)-48-2C^2+h^{0}(\mathcal{O}_S(C)) \geq \frac{13}{5}g-41+h^{0}(\mathcal{O}_S(C)) > 0$ if $g \geq 16$, where the first term is the left term of (\ref{5g-111chios2}) and the first inequality follows by (\ref{c2casoS=(...)}).
\item
Let $S=(2,2,3)$.
One has $5(g-1)-53-2C^2+h^{0}(\mathcal{O}_S(C)) \geq \frac{8}{3}g-44+h^{0}(\mathcal{O}_S(C)) >0$ if $g \geq 16$, where the first term is the left term of (\ref{5g-111chios2}) and the first inequality follows by (\ref{c2casoS=(...)}).
\item
Let $S=(2,2,2,2)$.
One has $5(g-1)-53-2C^2+h^{0}(\mathcal{O}_S(C)) \geq \frac{19}{7}g-45+h^{0}(\mathcal{O}_S(C)) >0$ if $g \geq 16$, where the first term is the left term of (\ref{5g-111chios2}) and the first inequality follows by (\ref{c2casoS=(...)}).
\end{itemize}
\end{proof}
One can sum up the previous results in the following theorem:
\begin{teo}
Let $C$ be a general curve of genus $g \geq 3$ moving in a positive-dimensional linear system on a nonsingular projective surface $S$ which is a complete intersection. Then $g \leq 15$. Moreover
\begin{itemize}
  \item if $g \leq 11$, then $\emph{Kod}(S) \leq 0$ or $S$ is a canonical surface;
  \item  if $12 \leq g \leq 15$, then $S$ is a canonical surface.
\end{itemize}
\end{teo}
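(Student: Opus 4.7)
The plan is to read the theorem as a corollary of the three main intermediate results already proved --- Proposition \ref{2322}, Theorem \ref{kod=0leq11}, and Proposition \ref{elencoSditipogenerale} --- organised by the classification of smooth complete intersection surfaces according to Kodaira dimension.

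First I would record the adjunction-theoretic classification. For a nonsingular complete intersection $S=(k_1,\ldots,k_{r-2})\subset\mathbb{P}^r$ one has $K_S\sim\mathcal{O}_S(h)$ with $h=\sum_i k_i-r-1$, and since $\mathcal{O}_S(1)$ is ample this single integer determines the Kodaira dimension: $\text{Kod}(S)=-\infty$ when $h<0$, $\text{Kod}(S)=0$ when $h=0$, and $\text{Kod}(S)=2$ when $h>0$, with $S$ canonical exactly in the subcase $h=1$. Enumerating the small cases, the surfaces with $h<0$ are $\mathbb{P}^2$, the quadric and cubic in $\mathbb{P}^3$ and $(2,2)\subset\mathbb{P}^4$; those with $h=0$ are the three K3 families $(4)\subset\mathbb{P}^3$, $(2,3)\subset\mathbb{P}^4$, $(2,2,2)\subset\mathbb{P}^5$; and those with $h=1$ are precisely the five canonical families listed in Proposition \ref{elencoSditipogenerale}.

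Next I would dispatch the three regimes separately. If $\text{Kod}(S)<0$ and $S\neq\mathbb{P}^2$, Proposition \ref{2322} yields $g\le 9$; if $S=\mathbb{P}^2$, the classical fact that a general curve of genus $g\ge 11$ is not a plane curve gives $g\le 10$. If $\text{Kod}(S)=0$ then $S$ is a K3 surface and Theorem \ref{kod=0leq11} yields $g\le 11$. If $\text{Kod}(S)\ge 1$, so that $S$ is of general type, Proposition \ref{elencoSditipogenerale} forces $S$ to be canonical (excluding the subcase $h\ge 2$) and gives $g\le 15$.

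Combining these bounds gives $g\le 15$ in every case, which is the first assertion. For the first bullet, assume $g\le 11$: the only regime that could violate the conclusion is $\text{Kod}(S)\ge 1$, and there Proposition \ref{elencoSditipogenerale} forces $S$ canonical. For the second bullet, assume $12\le g\le 15$: the negative Kodaira case is ruled out by Proposition \ref{2322} together with the plane-curve bound, and the Kodaira-zero case by Theorem \ref{kod=0leq11}, so $\text{Kod}(S)\ge 1$ and Proposition \ref{elencoSditipogenerale} again yields that $S$ is canonical. There is no genuine obstacle here: the theorem is a bookkeeping statement repackaging the three preceding results, the only substantive check being the adjunction-based enumeration of complete intersection surfaces in each Kodaira range.
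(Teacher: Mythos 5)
Your proposal is correct and follows essentially the same route as the paper: the paper itself presents this theorem as a summary ("One can sum up the previous results...") of exactly the case analysis you describe — the adjunction formula $K_S=\mathcal{O}_S(\sum k_i-r-1)$ sorting complete intersection surfaces into the four rational families (handled by the classical plane-curve fact and Proposition \ref{2322}), the three K3 families (handled by Theorem \ref{kod=0leq11}), and the general-type case (handled by Proposition \ref{elencoSditipogenerale}). Nothing further is needed.
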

This in particular implies that, if one wants to use Theorem \ref{equivalentimguniruledsistemalineare} to show uniru-ledness for some $\overline{\mathcal{M}}_g$, $g \geq 16$, the surface cannot be a complete intersection.
\end{section}
$\left.\right.$\\ \\
\textbf{Acknowledgements.}\\
The author wants to express his gratitude to his advisor Edoardo Sernesi which led him through to the world of moduli spaces of curves.
A special thank to Antonio Rapagnetta for many helpful discussions and to Claudio Fontanari for useful suggestions on the preliminary draft of this paper.
\smallskip
\small

$\left.\right.$\\
$\left.\right.$\\
\noindent
Luca Benzo, Dipartimento di Matematica, Università di Roma Tor Vergata, Via della Ricerca Scientifica 1, 00133 Roma, Italy. e-mail
benzo@mat.uniroma2.it

\end{document}